\newtheorem{theorem}{Theorem}[section]
\newtheorem*{theorem*}{Theorem}
\newtheorem{lemma}[theorem]{Lemma}
\newtheorem{proposition}[theorem]{Proposition}
\newtheorem{conjecture}[theorem]{Conjecture}
\newtheorem*{conjecture*}{Conjecture}
\newtheorem{question}[theorem]{Question}
\newtheorem{remark}[theorem]{Remark}
\newcommand{\ie}{{\em i.e.~}\ }
\newcommand{\confer}{{\em cf.~}\ }
\newcommand{\opname}[1]{\operatorname{\mathsf{#1}}}
\renewcommand{\mod}{\opname{mod}\nolimits}
\newcommand{\proj}{\opname{proj}\nolimits}
\newcommand{\add}{\opname{add}\nolimits}
\newcommand{\Loc}{\opname{Loc}\nolimits}
\newcommand{\rad}{\mathrm{rad}}
\renewcommand{\ker}{\opname{ker}\nolimits}
\newcommand{\thick}{\opname{thick}\nolimits}
\newcommand{\per}{\opname{per}\nolimits}
\renewcommand{\P}{\mathbb{P}}
\newcommand{\id}{\mathrm{id}}
\newcommand{\Hom}{\opname{Hom}}
\newcommand{\End}{\opname{End}}
\newcommand{\RHom}{\opname{RHom}}
\newcommand{\cHom}{\mathcal{H}\it{om}}
\newcommand{\cEnd}{\mathcal{E}\it{nd}}
\newcommand{\ten}{\otimes}
\newcommand{\lten}{\overset{\boldmath{L}}{\ten}}
\newcommand{\Cone}{\opname{Cone}}
\newcommand{\cb}{{\mathcal B}}
\newcommand{\cc}{{\mathcal C}}
\newcommand{\cd}{{\mathcal D}}
\newcommand{\ce}{{\mathcal E}}
\newcommand{\ch}{{\mathcal H}}
\newcommand{\cm}{{\mathcal M}}
\newcommand{\cp}{{\mathcal P}}
\newcommand{\ct}{{\mathcal T}}
\renewcommand{\hat}[1]{\widehat{#1}}
\newcommand{\m}{\mathfrak{m}}
\newcommand{\del}{\partial}
\numberwithin{equation}{section}
\newenvironment{introthm}[1]
  {\innerintrothm}
  {\endinnerintrothm}
\begin{document}

\title[Relative singularity categories III]{Relative singularity categories III: Cluster resolutions}

\author{Martin Kalck}
\thanks{M.K. was supported by the DFG grant Bu--1866/2--1 and ERSRC grant EP/L017962/1.}
\address{~~
Institut f\"ur Mathematik und Wissenschaftliches Rechnen, Universit\"at Graz, 8010 Graz, Austria
}

\email{martin.kalck@uni-graz.at}
\author{Dong Yang}
\thanks{D.Y. was supported by the National Science Foundation in China No. 11401297 and the DFG program SPP 1388 (YA297/1-1 and KO1281/9-1).}
\thanks{The authors are grateful to the referee for helpful comments.}
\address{Dong Yang, School of Mathematics, Nanjing University, Nanjing 210093, P. R. China}
\email{yangdong@nju.edu.cn}

\date{\today}
\maketitle

\begin{abstract}
We build the foundation of an approach studying the canonical form of $2$-Calabi--Yau triangulated categories with cluster-tilting objects using dg algebras and relative singularity categories. This is motivated by cluster theory, singularity categories and Wemyss's Homological Minimal Model Program and the relations between these topics.
\end{abstract}

\section{Introduction}
\noindent
Triangulated categories provide a natural framework for homological algebra and their structural properties are widely studied in mathematics and theoretical physics. The most important symmetry of a triangulated category is the build-in \emph{shift functor} $\Sigma$. Using classical works on dualities of Nakayama \cite{Nakayama39, Nakayama41} and Serre \cite{Serre55}, many triangulated categories arising in representation theory and algebraic geometry admit another very important automorphism called \emph{Serre functor}, see Happel \cite{Happel88} and Bondal \& Kapranov \cite{BK89}, respectively. A triangulated category is called \emph{$d$-Calabi--Yau} if there is a simple relation between these two functors: namely, if the $d$-th power $\Sigma^d$ of the shift functor is a Serre functor.  

Famous examples include derived categories of coherent sheaves on smooth \emph{Calabi-Yau varieties} (i.e. varieties with trivial canonical bundle), the related \emph{Kuznetsov component} for Fano varieties \cite{KuznetsovICM}, Buchweitz-Orlov \emph{singularity categories} \cite{Buchweitz87, Orlov04, Orlov09} of varieties with Gorenstein (isolated) singularities \cite{Auslander76, IyamaWemyss14} and \emph{cluster categories}, providing key insights into Fomin--Zelevinsky's cluster algebras \cite{FominZelevinsky02} via categorification \cite{BuanMarshReinekeReitenTodorov06,BuanMarshReiten08,CalderoKeller08,Palu08,Plamondon11}.

The study of canonical forms for derived and triangulated categories led to the development of \emph{Tilting theory} by Baer \cite{Baer88}, Bondal \cite{Bondal89}, Happel \cite{Happel88} and Rickard \cite{Rickard89}, generalising Beilinson's seminal work on derived categories of projective spaces $\P^n$, \cite{Beilinson78}. 
However, the definition of Calabi--Yau categories, does not allow any \emph{tilting objects}. \emph{Cluster-tilting theory} 
overcomes this problem in many cases (see e.g., Keller \& Reiten's recognition theorem \cite{KellerReiten08}, which we partly recover in Theorem \ref{thm:KR's-recognition-theorem}), and also plays a key role in the study of cluster algebras and Wemyss's homological minimal model program \cite{Wemyss18}. Indeed, \emph{cluster-tilting objects} in cluster categories and in singularity categories correspond to, respectively, seeds of cluster algebras \cite{BuanMarshReinekeReitenTodorov06} and Van den Bergh's noncommutative crepant resolutions (NCCRs) \cite{VandenBergh04, Iyama07}. Given their structural similarities, it is natural to ask for equivalences between cluster categories and singularity categories -- this is identified as a main problem in Iyama's 2018 ICM talk \cite{IyamaICM}. 
Motivated by this, we study the structure of $2$-Calabi--Yau categories with cluster tilting objects, approaching the following Morita-type conjecture of Amiot \cite{Amiot08}.


\begin{conjecture}\label{conj:morita-for-2-cy-intro} 
Let $k$ be an algebraically closed field of characteristic zero. 
Let $\cc$ be a 2-Calabi--Yau algebraic triangulated category with a cluster-tilting object $T$. Then $\cc$ is triangle equivalent to the cluster category of a quiver with potential.
\end{conjecture}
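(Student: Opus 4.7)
The plan is to pass from the pair $(\cc,T)$ to a non-positive dg algebra $A$ that is bimodule $3$-Calabi--Yau, then to identify $A$ up to quasi-isomorphism with the Ginzburg dg algebra $\Gamma(Q,W)$ of a finite quiver with potential, and finally to recover $\cc$ as the generalised cluster category $\per(\Gamma)/\cd_{fd}(\Gamma) = \cc(Q,W)$.

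First I would fix an algebraic enhancement $\ce$ of $\cc$ and set $A := \tau_{\leq 0}\RHom_\ce(T,T)$, so that $A$ is a non-positive dg $k$-algebra with $H^0(A) = \End_\cc(T) =: \Lambda$ and $H^{-i}(A) = \Hom_\cc(T,\Sigma^{-i}T)$ for $i \geq 0$. Hom-finiteness of $\cc$, which in this setting follows from the existence of a cluster-tilting object, implies that $\Lambda$ is finite-dimensional and basic. Combining the $2$-Calabi--Yau property of $\cc$ with the vanishing $\Hom_\cc(T,\Sigma T) = 0$ built into the cluster-tilting hypothesis, Keller's dg version of the Calabi--Yau condition should force a bimodule $3$-Calabi--Yau structure on $A$, i.e.\ a class in negative cyclic homology inducing a quasi-isomorphism $A \simeq \RHom_{A^e}(A,A^e)[3]$.

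Next I would produce the quiver with potential. Let $S = S_1 \oplus \cdots \oplus S_n$ be the semisimple quotient of $\Lambda$ corresponding to the indecomposable summands of $T$, and transport the bimodule $3$-Calabi--Yau structure on $A$ to a cyclic $A_\infty$-structure of degree $3$ on the minimal model of $\RHom_A(S,S)$, using Kontsevich--Soibelman and Van den Bergh. Such a cyclic structure is equivalent to a superpotential $W$ on the double quiver $Q$ whose arrows are read off from $\Ext^1_\Lambda(S,S)$. Van den Bergh's reconstruction of $3$-Calabi--Yau dg algebras then delivers a quasi-isomorphism $A \simeq \Gamma(Q,W)$. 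Finally, a Morita-type argument in the spirit of Keller--Reiten (Theorem~\ref{thm:KR's-recognition-theorem}), using that $T$ generates $\cc$ in the appropriate sense, identifies $\cc$ with $\per(A)/\cd_{fd}(A) \simeq \per(\Gamma)/\cd_{fd}(\Gamma) = \cc(Q,W)$.

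The main obstacle is the construction of $(Q,W)$. When $\Ext^1_\Lambda(S,S)$ is finite-dimensional this is essentially Amiot's original theorem, but in general these Ext-spaces may blow up, and one has to show both that $Q$ can be chosen finite and that $W$ can be made sense of, perhaps as a formally infinite but convergent potential in a suitably completed path algebra. The framework of this paper series suggests a route around this: replace $T$ by a \emph{cluster resolution} with better finiteness properties, carry out the above Koszul-duality construction there, and then descend the equivalence back to $\cc$ via an explicit Verdier quotient encoded by the relative singularity category. The delicate point will be controlling the higher $A_\infty$-operations during this descent so that the resulting potential is well-defined and that no information is lost along the way.
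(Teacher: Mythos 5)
This statement is Conjecture~\ref{conj:morita-for-2-cy-intro} (restated as Conjecture~\ref{conj:cy-cat-is-generalised-cluster-cat}), and the paper does \emph{not} prove it; it only builds foundations for an approach and explicitly identifies where the argument currently breaks. Your sketch follows essentially the same strategy as the paper, but the place where you write ``Keller's dg version of the Calabi--Yau condition \emph{should} force a bimodule $3$-Calabi--Yau structure on $A$'' is precisely the open step, and as stated it is not a consequence of anything currently known. What the paper actually extracts from the $2$-Calabi--Yau property of $\cc$ is the much weaker one-sided duality in Theorem~\ref{thm:cluster-resolution}(d): a bifunctorial isomorphism $D\Hom_{\cd_{fd}(B)}(X,Y)\cong\Hom_{\cd_{fd}(B)}(Y,\Sigma^{3}X)$ only for $X\in\mod H^0(B)$. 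This does not say that $\cd_{fd}(B)$ is $3$-Calabi--Yau as a triangulated category (the remark after Theorem~\ref{thm:cluster-resolution} points out that exactly this overclaim appears in the literature and is wrong), let alone that $B$ carries a bimodule or exact $3$-Calabi--Yau structure. The gap between this one-sided duality and the exact Calabi--Yau property needed to invoke Van den Bergh's reconstruction \cite[Theorem~B]{VandenBergh15} is precisely the content of Question~\ref{Q:CYpropDG}, which the paper leaves open. Your proposal silently assumes a positive answer to that question.

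A secondary remark: the obstacle you name at the end (finiteness of $\Ext^1_\Lambda(S,S)$ and convergence of the potential) is not the real issue. Since $\cc$ is a Hom-finite Krull--Schmidt $2$-Calabi--Yau category with cluster-tilting object, the conditions (FD), (BE), (HS) of Section~\ref{ss:augmentation} hold for the dg algebra $B$, and Theorem~\ref{t:augmentation-of-non-pos-dg-alg} already realizes $B$ as a complete dg-quiver algebra $(\widehat{kQ},d)$ with $Q$ finite; finiteness and completion are under control. Also, the paper does not take $B=\tau_{\leq 0}\RHom_\ce(T,T)$ directly; it builds $B$ via Palu's short exact sequence of triangulated categories and silting reduction (Section~\ref{s:relative-singularity-category}), which realizes $\per(B)$ as the relative singularity category $\ch^b(\cm)/\ch^b(\cp)$ and makes the quotient description $\cc\cong\per(B)/\cd_{fd}(B)$ structural rather than something to be verified post hoc. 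That construction is the actual content of the paper; the passage to a Ginzburg dg algebra is what remains conjectural.
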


\noindent
One approach to attack this conjecture is developed in \cite{Amiot09} and further pursued in \cite{AmiotReitenTodorov11,AmiotIyamaReitenTodorov12,AmiotIyamaReiten15}. An alternative strategy, using relative singularity categories, was suggested in \cite{deVolcseyVandenBergh16,KalckYang16}. We state the main result of this paper, which builds the foundation of our approach. 

\begin{introthm}{\ref{thm:cluster-resolution}}\label{T:Main}
Let $k$ be an algebraically closed field of characteristic zero. Let $\cc$ be a $d$-Calabi--Yau algebraic triangulated $k$-category with a $d$-cluster-tilting object $T$.
Then there is a dg $k$-algebra $B$ such that
\begin{itemize}
\item[(a)] $H^p(B)=0$ for $p>0$, $H^0(B)\cong \End_\cc(T)$ and $H^p(B)\cong \Hom_\cc(T,\Sigma^{p}T)$ for $p<0$;
\item[(b)] $\per(B)\supseteq\cd_{fd}(B)$;
\item[(c)] there is a triangle equivalence $\per(B)/\cd_{fd}(B)\to \cc$ which takes $B$ to $T$;
\item[(d)] there is a bifunctorial isomorphism for $X\in\mod H^0(B)$ and $Y\in\cd_{fd}(B)$
\[
D\Hom_{\cd_{fd}(B)}(X,Y)\cong \Hom_{\cd_{fd}(B)}(Y,\Sigma^{d+1}X).
\]
\end{itemize}
\end{introthm}
Known examples of categories $\cc$ satisfying the assumptions of Theorem \ref{T:Main} are singularity categories $D_{sg}(R)$ of $(d+1)$-dimensional Gorenstein isolated singularities $R$ admitting an NCCR. In order to explain the relation between our Theorem and Amiot's Conjecture \ref{conj:morita-for-2-cy-intro}, 
we first recall the definition of a \emph{cluster category} $\cc_{(Q,W)}$ of a quiver $Q$ with potential $W$:
\begin{align}
\cc_{(Q,W)} = \per(B)/\cd_{fd}(B),
\end{align}
where $B=\widehat{\Gamma}(Q,W)$ is the \emph{complete Ginzburg dg algebra} associated to $Q$ and $W$, \cite{Ginzburg06}. 

In particular, taking $d=2$ in our Theorem reduces Conjecture \ref{conj:morita-for-2-cy-intro} to the problem of finding quasi-equivalences between the dg algebras $B$ appearing in Theorem \ref{T:Main} and \emph{complete Ginzburg dg algebras} $\widehat{\Gamma}(Q,W)$. To show the latter, 
it is sufficient (\cite[Theorem B]{VandenBergh15}) to show that the dg algebras $B$ are quasi-equivalent to \emph{exact} $3$-bimodule dg algebras in the sense of \cite[Section 1, Definition]{VandenBergh15}, leading to Question \ref{Q:CYpropDG}. The problem of comparing different Calabi--Yau properties of dg algebras is also studied in \cite{Bocklandt08,Ginzburg06,VandenBergh15}.

\begin{remark}
The dg algebra $B$ in Theorem~\ref{thm:cluster-resolution} is a dg universal localisation of a certain algebra (possibly with several objects) $A$ \cite{KalckYang16,KalckYang18a,Booth20}. If $A$ is quasi-isomorphisc to a complete Ginzburg dg algebra, so is $B$. For example, let $R$ be a complete local Gorenstein isolcated singularity with residue field $k$ of dimension $3$ admitting an NCCR $A$. Then 
by Van den Bergh's \cite{VandenBergh15}, $A$ is quasi-isomorphic to a complete Ginzburg dg algebra. It follows that Amiot's conjecture holds for $\cc=\cd_{sg}(R)$,
see \cite{deVolcseyVandenBergh16}. And see \cite{HuaKeller18, Booth20} for applications to a derived version of the Donovan-Wemyss conjecture \cite{DonovanWemyss16}, where $R$ is a hypersurface singularity. 

Moreover, Theorem \ref{T:Main} (a) was also observed by Booth \cite[Theorem 6.4.2.]{Booth20} in case $\cc=\cd_{sg}(R)$ for $d+1$-dimensional Gorenstein singularities $R$. In particular, if $d=2$ and $R$ is a hypersurface, then the shift functor is $2$-periodic by work of Eisenbud \cite{Eisenbud80} and Theorem \ref{T:Main} (a) implies that the cohomology of $B$ satisfies
\begin{align}
H^i(B) = \begin{cases} 0 \qquad \qquad \qquad \text{  if  } i>0, \\  0 \qquad \qquad \qquad \text{  if  } i \leq 0 \text{  is odd,  }  \\ \End_{D_{sg}(R)}(T) \quad \text{  if  } i \leq 0 \text{  is even.  }\end{cases} 
\end{align}
\end{remark}

%

We also use Theorem \ref{thm:cluster-resolution} to give an alternative proof of Keller \& Reiten's recognition theorem \cite[Theorems 2.1 and 4.2]{KellerReiten08} in the special case that the quiver $Q$ is a tree.

\begin{introthm}
{\ref{thm:KR's-recognition-theorem}}
Let $\cc$ be a $d$-Calabi--Yau algebraic triangulated category. Assume that $T$ is a $d$-cluster-tilting object of $\cc$ such that $\Hom_\cc(T,\Sigma^p T)=0$ for $-d+2\leq p\leq -1$ and that $\End_\cc(T)\cong kQ$ for some finite tree quiver $Q$.

Then there is an equivalence of triangulated categories
\[
\cc \cong \cc^{(d)}_Q,
\]
where $\cc^{(d)}_Q$ is a triangulated orbit category in the sense of Keller \cite{Keller05}:
\[
\cc^{(d)}_Q:=\cd^b(\mod kQ)/\tau^{-1}\circ \Sigma^{d-1}.
\]
\end{introthm}

\medskip

Recently, a general framework to relate singularity categories and cluster categories is established in \cite{HaniharaIyama22}, and a variant of Amiot's Conjecture \ref{conj:morita-for-2-cy-intro} together with its higher-dimensional and relative version is proved in \cite{KellerLiu23}. Here is the statement for the higher-dimensional version: Under suitable assumptions, a triangulated category $\cc$ is triangle equivalent to the cluster category associated with a (d+1)-dimensional deformed dg preprojective algebra if and only if $\cc$ contains a $d$-cluter-tilting object and admits a dg enhancement endowed with a right $d$-Calabi--Yau structure.

\smallskip
Throughout the paper, let $k$ be a commutative ring.

\section{DG algebras and $A_\infty$-algebras}

A \emph{dg $k$-algebra} $A$ is a $\mathbb{Z}$-graded $k$-algebra $A=\bigoplus_{p\in\mathbb{Z}}A^p$ endowed with a differential $d$ of degree $1$ such that the graded Leibniz rule holds
\[
d(ab)=d(a)b+(-1)^p ad(b)
\]
for all $a\in A^p$ ($p\in\mathbb{Z}$) and $b\in A$.

\subsection{Complete dg-quiver algebras}
Assume that $k$ is a field.

\smallskip
Let $Q$ be a graded quiver such that that $Q_0$ is finite. The \emph{complete graded path algebra} $\widehat{kQ}$ of $Q$ is the completion of the graded path algebra $kQ$ at the graded ideal $\m$ generated by the arrows of $Q$ in the category of graded $k$-algebras. So the degree $n$ component of $\widehat{kQ}$ consists of elements of the form $\Sigma_p \lambda_p p$, where $\lambda_p\in k$ and $p$ runs over all paths of $Q$ of degree $n$. We refer to \cite[Section II.3]{CaenepeelVanOystaeyen88} for the theory on completions of graded rings. Let $\widehat{\m}$ be the completion of $\m$ in $\widehat{kQ}$, \ie $\widehat{\m}$ consists of elements of the form $\Sigma_p \lambda_p p$, where $\lambda_p\in k$ and $p$ runs over all non-trivial paths of $Q$. Then the $\widehat{\m}$-adic topology on $\widehat{kQ}$ is pseudo-compact (\cite{Keller11a,VandenBergh15}). For an ideal $I$ of $\widehat{kQ}$ we denote by $\overline{I}$ the closure of $I$ with respect to this topology.

Let $r\in\mathbb{N}$ and $K$ be the direct product of $r$ copies of $k$ (with standard basis $e_1,\ldots,e_r$). Let $V$ be a graded $K$-$K$-bimodule. Thencomplete tensor algebra $\widehat{T}_K V:=\prod_{p\geq 0}V^{\ten_K p}$ is isomorphic to the complete graded path algebra $\widehat{kQ}$ of the graded quiver $Q$ which has vertex set $\{1,\ldots,r\}$ and which has $\dim_k e_jV^m e_i$ arrows of degree $m$ from $i$ to $j$.

\smallskip
We call a dg $k$-algebra a \emph{complete dg-quiver algebra}  if it is of the form $A=(\widehat{kQ},d)$, where $Q$ is a
graded quiver with finitely many vertices and
 $d\colon \widehat{kQ}\rightarrow \widehat{kQ}$ is a continuous $k$-linear
differential of degree $1$ such that $d$ takes all
trivial paths to $0$.  It is \emph{minimal} if $d$ takes
an arrow of $Q$ into $\widehat{\m}^2$. By the graded Leibniz rule, the differential $d$ is determined by its value on arrows.  Since $d$ takes all trivial paths to $0$, it follows (again by the graded Leibniz rule) that $d$ takes an arrow $\alpha$ to a (possibly infinite) linear combination of paths with source $s(\alpha)$ and target $t(\alpha)$.

\begin{lemma}
\label{lem:dg-quiver-algebra-with-hereditary-0-th-cohomology}
Let $A=(\widehat{kQ},d)$ be a minimal complete dg-quiver algebra such that $Q$ is concentrated in non-positive degrees and let $m$ be a positive integer. If $H^{p}(C)=0$ for all $-m\leq p\leq -1$ and if $H^0(A)$ is hereditary, then $Q$ has no arrows in degree $p$ for any $-m\leq p\leq -1$.
\end{lemma}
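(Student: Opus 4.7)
I proceed by induction on $m$, exploiting the fact that since $A$ is minimal, $d$ carries every arrow into $\widehat{\m}^2$ and hence cannot produce a ``bare arrow'' as output. Write $B_0 := A^0 = \widehat{kQ^0}$, where $Q^0$ denotes the degree-$0$ subquiver of $Q$; let $B_0^+ \subset B_0$ be the closed ideal generated by the arrows of $Q^0$; and let $W_p$ denote the $K$-$K$-bimodule spanned by the arrows of $Q$ in degree $p$.

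For the inductive step, applying the lemma for $m-1$ (whose hypotheses are implied by those for $m$) shows that $Q$ has no arrows in degrees $-(m-1),\ldots,-1$. A degree count then establishes that a path of degree $q \in [-(m-1),-1]$ would require at least one arrow of degree $\le -m$, contributing $\le -m < q$, so no such path exists and $A^q = 0$ throughout this range; moreover $A^{-m} \cong B_0 \, \widehat{\otimes}_K \, W_{-m} \, \widehat{\otimes}_K \, B_0$ and, for $m \geq 2$, $A^{-m-1} \cong B_0 \, \widehat{\otimes}_K \, W_{-m-1} \, \widehat{\otimes}_K \, B_0$. Since $d \colon A^{-m} \to A^{-m+1} = 0$ vanishes, every element of $A^{-m}$ is a cycle, and $H^{-m}(A) = 0$ forces $d \colon A^{-m-1} \to A^{-m}$ to be surjective. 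However, $d$ is $B_0$-bilinear, and minimality places
\[
d(W_{-m-1}) \subseteq \widehat{\m}^2 \cap A^{-m} = B_0^+ \, \widehat{\otimes}_K \, W_{-m} \, \widehat{\otimes}_K \, B_0 \;+\; B_0 \, \widehat{\otimes}_K \, W_{-m} \, \widehat{\otimes}_K \, B_0^+,
\]
a proper sub-bimodule of $A^{-m}$ that excludes the pure-arrow summand $K \otimes_K W_{-m} \otimes_K K \cong W_{-m}$; hence $\im(d) \neq A^{-m}$ unless $W_{-m} = 0$, giving the required contradiction.

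The base case $m=1$ follows the same pattern but first requires showing $d|_{W_{-1}} = 0$. Minimality gives $I := \overline{\im(d \colon A^{-1} \to A^0)} \subseteq (B_0^+)^2$; consequently $H^0(A) = B_0/I$ shares its Ext-quiver $Q^0$ with $B_0$. Combining this with the hereditary hypothesis forces, via a Nakayama/dimension argument in the pseudo-compact setting, $I = 0$ (and $Q^0$ acyclic), so $d\alpha = 0$ for every $\alpha \in W_{-1}$. Then $\alpha$ is a cycle; $H^{-1}(A) = 0$ yields $\alpha = d\gamma$; yet Leibniz together with the vanishing $d|_{W_{-1}} = 0$ shows that $d\gamma$ only receives contributions from the degree-$(-2)$ arrows of $\gamma$, and minimality places $d\gamma$ in the proper sub-bimodule $B_0^+ \, \widehat{\otimes}_K \, W_{-1} \, \widehat{\otimes}_K \, B_0 + B_0 \, \widehat{\otimes}_K \, W_{-1} \, \widehat{\otimes}_K \, B_0^+$, which does not meet $W_{-1}$, a contradiction.

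The main obstacle I anticipate is the Nakayama-type step in the base case: showing that a hereditary quotient of the complete path algebra $\widehat{kQ^0}$ by an ideal sitting in $(B_0^+)^2$ must in fact coincide with $B_0$ (so $I=0$) with $Q^0$ acyclic. Beyond this, the proof is a clean interplay of minimality, degree-counting of paths in $Q$, and the $B_0$-bilinearity of $d$.
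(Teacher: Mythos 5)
Your proof is correct and follows essentially the same strategy as the paper's: induction on $m$, using that arrows in degree $-m$ are cycles and that minimality prevents them from being coboundaries. The paper's own argument is very terse---it simply asserts ``Since $H^0(A)$ is hereditary, we have $d|_{Q_1^{-1}}=0$'' and that arrows are ``not in the image of $d$''---and your write-up supplies exactly the missing justifications: the identification $A^{-m}\cong B_0\,\widehat{\otimes}_K\,W_{-m}\,\widehat{\otimes}_K\,B_0$ once the inductive hypothesis kills degrees $-(m-1),\dots,-1$, and the observation that $B_0$-bilinearity plus $d(W_{-m-1})\subseteq\widehat{\m}^2$ traps the image in a proper sub-bimodule missing the bare-arrow summand $W_{-m}$.

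The ``Nakayama-type step'' you flag as a potential obstacle is not an obstacle; it is the standard fact that if $\Lambda=\widehat{kQ^0}/I$ with $I\subseteq(B_0^+)^2$ a closed ideal, then the minimal number of relations from $i$ to $j$ computes $\dim\Ext^2_{\Lambda}(S_i,S_j)$, which vanishes when $\Lambda$ is hereditary, forcing $I=0$ by topological Nakayama; this works in the pseudo-compact setting with $Q_0$ finite. (Your parenthetical ``$Q^0$ acyclic'' is neither implied nor needed: $\widehat{kQ^0}$ is hereditary for any finite quiver $Q^0$.) In the base case $m=1$, once you have $d|_{W_{-1}}=0$ your extra remark about $d\gamma$ only seeing degree-$(-2)$ arrows of $\gamma$ is true but superfluous---minimality already gives $d\gamma\in\widehat{\m}^2\cap A^{-1}$ directly, which is all you use.

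One point worth making explicit, since you noticed it and the paper did not: both proofs use that $A$ is \emph{minimal}, but the lemma's hypotheses and the definition of ``complete dg-quiver algebra'' do not say so. Without minimality the statement is false---for instance take $Q$ with vertices $1,2$, a degree-$0$ arrow $\beta\colon 1\to 2$ and a degree-$(-1)$ arrow $\alpha\colon 1\to 2$ with $d\alpha=\beta$; then $H^0(A)\cong k\times k$ is (hereditary) semisimple and $H^{-1}(A)=0$, yet $Q$ has an arrow in degree $-1$. Minimality is secretly in force because the lemma is only ever applied to $B=E(B^*)$ with $B^*$ a minimal $A_\infty$-algebra, which makes $E(B^*)$ minimal; but your instinct to invoke it explicitly is sound.
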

\begin{proof}
Let $Q_1^p$ denote the set of arrows of $Q$ of degree $p$. Since $H^0(A)$ is hereditary, we have $d|_{Q_1^{-1}}=0$. We prove the statement by induction on $m$. Any arrow of degree $-1$ is contained in the kernel but not in the image  of $d$. Thus $H^{-1}(A)=0$ implies that $Q_1^{-1}$ is empty, so the statement holds true for $m=1$. Now assume $m\geq 2$. By induction hypothesis $Q_1^p$ is empty for $-m+1\leq p\leq -1$. Thus any arrow of degree $-m$ is contained in the kernel but not in the image of $d$. Thus $H^{-m}(A)=0$ implies that $Q_1^{-m}$ is empty.
\end{proof}

\subsection{A Duality}\label{ss:duality} 
This subsection generalises part of \cite[Section 10]{Keller94}. Assume that $k$ is a field.
\smallskip

Let $A$ be a dg $k$-algebra and $M$ be a dg $A$-module. We assume that $M$ is $\ch$-projective or $\ch$-injective and put $B=\cEnd_A(M)$. Then $M$ becomes a dg $B$-$A$-bimodule and we have an adjoint pair of triangle functors
\[
\begin{xy}
\SelectTips{cm}{}
\xymatrix{\cd(B)\ar@<.7ex>[rr]^{?\lten_B M}&&\cd(A)\ar@<.7ex>[ll]^{\RHom_A(M,?)}.}
\end{xy}
\]
Let $\tilde{M}$ be an $\ch$-projective resolution of $M$ over $B^{op}\ten A$. Then $\tilde{M}$ is $\ch$-projective over both $B^{op}$ and $A$. So we have isomorphisms of triangle functors
\[?\lten_B M\cong ?\lten_B\tilde{M}\cong ?\ten_B \tilde{M}\]
and 
\[\RHom_A(M,?)\cong \RHom_A(\tilde{M},?)\cong \cHom_A(\tilde{M},?).\]

Let $M^*=\cHom_A(\tilde{M},D(A))=D(\tilde{M})$. Then $M^*$ is a dg $\cEnd_A(D(A))$-$B$-bimodule. Let $N$ be an $\ch$-projective or $\ch$-injective resolution of $M^*$ over $B$ and put $C=\cEnd_B(N)$. Let $\tilde{N}$ be an $\ch$-projective resolution of $N$ over $C^{op}\ten B$. Consider the dg functor
\[
F=\cHom_A(\tilde{M},?)\circ (?\ten_{\cEnd_A(D(A))} D(A))\colon  \cc_{dg}(\cEnd_A(D(A)))\to\cc_{dg}(B),
\]
which takes $\cEnd_A(D(A))$ to $M^*$ and whose left derived functor is
\[
\mathbb{L}F=\cHom_A(\tilde{M},?)\circ (?\lten_{\cEnd_A(D(A))} D(A))\colon  \cd(\cEnd_A(D(A)))\to\cd(B).
\]
By \cite[Lemma 7.3(a)]{Keller94}, $Y=\cHom_B(\tilde{N},M^*)=\cHom_B(\tilde{N},F(\cEnd_A(D(A))))$ is a quasi-functor from $\cEnd_A(D(A))$ to $C$, and it is a quasi-equivalence if and only if $\mathbb{L}F$ restricts to a triangle equivalence $\per(\cEnd_A(D(A)))\to\thick_{\cd(B)}(M^*)$, equivalently, $\cHom_A(\tilde{M},?)$ restricts to a triangle equivalence $\thick_{\cd(A)}(D(A))\to\thick_{\cd(B)}(M^*)$ because $?\lten_{\cEnd_A(D(A))} D(A)\colon \per(\cEnd_A(D(A)))\to \thick_{\cd(A)}(D(A))$ is always a triangle equivalence.

\begin{lemma}\label{l:duality}
If $M\in\per(A)$ and $D(A)\in\Loc_{\cd(A)}(M)$, then the quasi-functor $Y$ above is a quasi-equivalence.
\end{lemma}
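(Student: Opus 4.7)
The plan is to derive the lemma from the classical fact that a compact object $M\in\per(A)$ with endomorphism dg algebra $B=\cEnd_A(M)$ induces a triangle equivalence between $\cd(B)$ and the localising subcategory $\Loc_{\cd(A)}(M)$, and then to restrict this equivalence to the thick subcategories generated by $D(A)$ and $M^*$. Combined with the equivalent criterion recalled in the paragraph preceding the lemma, this is exactly what needs to be verified.

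First I would exploit the adjoint pair $(?\lten_B M,\RHom_A(M,?))$ discussed above: because $M\in\per(A)$ is compact in $\cd(A)$, the right adjoint $\RHom_A(M,?)\cong\cHom_A(\tilde M,?)$ commutes with arbitrary coproducts, while the left adjoint $?\lten_B M$ automatically does.

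Next I would show that the unit $\eta_X\colon X\to\RHom_A(M,X\lten_B M)$ is an isomorphism in $\cd(B)$ for every $X$. For $X=B$ the unit is the canonical map $B\to\cEnd_A(M)=B$, which is the identity, since $M$ is $\ch$-projective or $\ch$-injective and hence $\RHom_A(M,M)=\cEnd_A(M)=B$ in $\cd(B)$. A standard d\'evissage then extends the isomorphism to every $X$: both the identity functor on $\cd(B)$ and $X\mapsto\RHom_A(M,X\lten_B M)$ are triangulated and commute with coproducts, and every object of $\cd(B)$ lies in the smallest localising subcategory containing $B$. Hence $?\lten_B M$ is fully faithful, and its essential image is a localising triangulated subcategory of $\cd(A)$ containing $B\lten_B M=M$, i.e.\ it equals $\Loc_{\cd(A)}(M)$. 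Consequently $\RHom_A(M,?)$ restricts to a triangle equivalence $\Loc_{\cd(A)}(M)\iso\cd(B)$, sending $D(A)$ to $\cHom_A(\tilde M,D(A))=M^*$.

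Finally, the hypothesis $D(A)\in\Loc_{\cd(A)}(M)$ places $D(A)$ inside the domain of this equivalence, and since $\Loc_{\cd(A)}(M)\subseteq\cd(A)$ is a full triangulated subcategory we have $\thick_{\Loc_{\cd(A)}(M)}(D(A))=\thick_{\cd(A)}(D(A))$. Restricting the equivalence therefore yields a triangle equivalence $\thick_{\cd(A)}(D(A))\iso\thick_{\cd(B)}(M^*)$, which is precisely the criterion recalled before the lemma for $Y$ to be a quasi-equivalence. The step I expect to be the main obstacle is the d\'evissage in the third paragraph: propagating the isomorphism from $\eta_B$ to an arbitrary $\eta_X$ rests critically on compactness of $M$ through the coproduct-preservation of $\RHom_A(M,?)$, and one must be a little careful that $\ch$-projectivity or $\ch$-injectivity of $M$ really delivers $\RHom_A(M,M)=B$ on the nose rather than only up to a zigzag of quasi-isomorphisms.
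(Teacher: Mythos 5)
Your proposal is correct and follows essentially the same route as the paper: the paper cites as known the fact that for a compact $M\in\per(A)$ the functor $\RHom_A(M,?)$ restricts to an equivalence $\Loc_{\cd(A)}(M)\to\cd(B)$, and then restricts further using $D(A)\in\Loc_{\cd(A)}(M)$, whereas you additionally unwind the proof of that classical dg Morita statement (unit isomorphism on $B$, d\'evissage via coproduct preservation). Everything you write checks out; in particular the observation that $\RHom_A(M,M)=\cEnd_A(M)=B$ on the nose because $M$ is $\ch$-projective or $\ch$-injective is exactly the point that makes the unit computation work.
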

\begin{proof}
If $M\in\per(A)$, then $\cHom_A(\tilde{M},?)\cong\RHom_A(M,?)$ restricts to a triangle equivalence $\Loc_{\cd(A)}(M)\to \cd(B)$. If in addition $D(A)\in\Loc_{\cd(A)}(M)$, it restricts further to a triangle equivalence $\thick_{\cd(A)}(D(A))\to \thick_{\cd(B)}(M^*)$.
\end{proof}

\subsection{Koszul duality}\label{ss:koszul-duality}
Assume that $k$ is field. 
Fix $r\in\mathbb{N}$. Let $K$ be the direct product of $r$ copies of $k$ and consider it as a $k$-algebra via the diagonal embedding.

A dg algebra over $K$ is a dg $k$-algebra $A$ together with a dg $k$-algebra homomorphism $\eta\colon K\to A$, called the \emph{unit}. It is \emph{augmented} if in addition there is a dg $k$-algebra homomorphism $\varepsilon\colon A\to K$, called the \emph{augmentation map}, such that $\varepsilon\eta=\id_K$.

Let $A$ be an augmented dg algebra over $K$. Denote by $\bar{A}=\ker\varepsilon$. Note that $\bar{A}$ is a dg ideal of $A$.
The \emph{bar construction} of $A$, denoted by $BA$, is the graded $K$-bimodule
$$T_{K}(\bar{A}[1])=K\oplus \bar{A}[1]\oplus\bar{A}[1]\ten_{K}\bar{A}[1]\oplus\ldots.$$
It is naturally a coalgebra with comultiplication $\Delta\colon  BA\to BA\ten_K BA$ defined by
splitting the tensors:
\[
\Delta(a_1\ten\cdots \ten a_n)=\sum_{p=0}^n (a_1\ten\cdots\ten a_p)\ten (a_{p+1}\ten\cdots\ten a_n).
\]
Moreover, the differential and multiplication on $\bar{A}$ uniquely extend
to a $K$-bilinear differential on $BA$, making it a dg coalgebra over $K$:
\begin{align*}
d_{BA}(a_1\ten \cdots\ten a_n)&=\sum_{p=1}^{n-1}(-1)^{|a_1|+\ldots+|a_p|+1} a_1\ten\cdots\ten a_p\ten d_A(a_{p+1})\ten a_{p+2}\ten\cdots\ten a_n\\
&+\sum_{p=1}^{n-2}(-1)^{|a_1|+\ldots+|a_p|+|a_{p+1}|}a_1\ten\cdots\ten a_p\ten a_{p+1}a_{p+2}\ten a_{p+3}\ten\cdots\ten a_n,
\end{align*}
where $a_1,\ldots,a_n$ are homogeneous elements of $\bar{A}[1]$ of degree $|a_1|,\ldots,|a_n|$, respectively.
The dual bar construction of $A$ is the graded $k$-dual of $BA$:
\[E(A)=B^{\#}A:=D(BA).\]
As a graded algebra $E(A)=\hat{T}_{K}(D(\bar{A}[1]))$ is the complete tensor algebra of $D(\bar{A}[1])=\Hom_k(\bar{A}[1],k)$
over $K$. It is naturally an augmented dg algebra over $K$ with differential $d$ being the unique continuous $K$-bilinear map satisfying the graded Leibniz rule and taking 
$f \!\in D(\bar{A}[1])$ to $d(f)\! \in D(\bar{A}[1])\oplus D(\bar{A}[1]\ten_K \bar{A}[1])$, defined by
\begin{align*}
d(f)(a_1)&=-f(d_A(a_1)),\\
d(f)(a_1\ten a_2)&= (-1)^{|a_1|} f(a_1a_2),
\end{align*}
where $a_1,a_2$ are homogeneous elements of $\bar{A}[1]$ of degree $|a_1|, |a_2|$, respectively.

There is a certain $\ch$-projective resolution $M$, which we call the \emph{bar resolution}, of $K$ (viewed as a dg $A$-module via the augmentation map), such that $\cEnd_A(M)=B^{\#}A$. See \cite{SuHao18} and \cite[Section 19 exercise 4]{FelixHalperinThomas01}.

\subsection{$A_\infty$-algebras}\label{ss:A-infinity-algebra}

Assume that $k$ is a field and fix $r\in\mathbb{N}$. Let $K$ be the direct product of  $r$ copies of $k$ and consider it as a $k$-algebra via the diagonal embedding.

An \emph{$A_\infty$-algebra} $A$ over $K$ is a graded $K$-bimodule endowed with a family of homogenous $K$-bilinear maps $\{m_n\colon A^{\ten_K
n}\rightarrow A |n\geq 1\}$ of degree
$2-n$ (called \emph{multiplications})  satisfying certain conditions.  We need the following facts, see \cite{Keller06c,Lefevre03,LuPalmieriWuZhang04,LuPalmieriWuZhang08,KalckYang16}.
\begin{itemize}
\item[(a)] A dg algebra over $K$ is a strictly unital $A_\infty$-algebra over $K$ with $m_1$ being the differential, $m_2$ being the multiplication and $m_n=0$ for $n\geq 3$.
\item[(b)] (\cite[Corollarie 3.2.4.1]{Lefevre03}) A strictly unital $A_\infty$-algebra $A$ over $K$ admits a minimal model, that is, a strictly unital minimal $A_\infty$-algebra  which is $A_\infty$-quasi-isomorphic to $A$.
\item[(c)] (\cite[Lemme 2.3.4.3]{Lefevre03}) Let $A$ be an augmented $A_\infty$-algebra over $K$. Then there is an augmented dg algebra $U(A)$ over $K$, called the \emph{enveloping dg algebra of $A$}, together with an $A_\infty$-quasi-isomorphism $\psi\colon  A\to U(A)$ of augmented $A_\infty$-algebras over $K$. It satisfies the following universal property: if $B$ is a dg algebra over $K$ and $f\colon  A\to B$ is a strictly unital $A_\infty$-morphism of strictly unital $A_\infty$-algebras over $K$, then there is a unique homomorphism $f'\colon U(A)\to B$ of dg algebras over $K$ such that $f=f'\circ\psi$.
\item[(d)] (\cite[Section 2.7]{KalckYang16}) For an augmented $A_\infty$-algebra $A$ over $K$, one can define the dual bar construction $E(A)$, which is a dg algebra over $K$. If $A$ is an augmented dg algebra over $K$, then it is exactly the one defined in in Section~\ref{ss:koszul-duality}. If $A$ and $B$ are $A_\infty$-quasi-isomorphic augmented $A_\infty$-algebras over $K$, then $E(A)$ and $E(B)$ are quasi-isomorphic as dg algebras over $K$.
\item[(e)] (\cite[Section 4.2]{SuYang19}) A strictly unital minimal $A_\infty$-algebra $A$ is said to be \emph{positive}  if
\begin{itemize}
\item[--] $A^p=0$ for all $p<0$,
\item[--] $A^0=K$ and the unit is the embedding $K=A^0\hookrightarrow A$.
\end{itemize}

\end{itemize}

\subsection{Non-positive dg algebra: augmentation}\label{ss:augmentation}
Assume that $k$ is a field. Let $A$ be a non-positive dg $k$-algebra satisfying the following three conditions:
\begin{itemize}
\item[(FD)] $H^0(A)$ is finite-dimensional over $k$,
\item[(BE)] $H^0(A)$ is elementary, \ie $H^0(A)$ is isomorphic to the quotient of the path algebra of a finite quiver by an admissible ideal,
\item[(HS)] $\per(A)\supseteq \cd_{fd}(A)$.
\end{itemize}
In this subsection we will show that $A$ is quasi-equivalent to the dual bar construction of the $A_\infty$-Koszul dual of $A$.

Let $K=H^0(A)/\rad H^0(A)$ and $r=\dim_k K$. As an algebra, $K$ is the direct product of $r$ copies of $k$. View $K$ as a dg $A$-module via the projection $A\to H^0(A)\to K$ and denote this dg module by $S$. We have a decomposition $S=S_1\oplus\ldots\oplus S_r$ such that $S_1,\ldots,S_r$ form a complete set of pairwise non-isomorphic simple $H^0(A)$-modules. 

Take (arbitrary) $\ch$-projective resolutions $X_1,\ldots,X_r$ of $S_1,\ldots,S_r$ over $A$. Put $X=X_1\oplus\ldots\oplus X_r$ and $B=\cEnd_A(X)$. Then $B$ is a dg algebra over $K$, with $K$ being identified with $k\{\id_{X_1}\}\times\cdots\times k\{\id_{X_r}\}$. Denote by $A^*$ the minimal model of $B$ (Section~\ref{ss:A-infinity-algebra}(b)) and call it the \emph{$A_\infty$-Koszul dual} of $A$. By definition $A^*$ is a strictly unital minimal $A_\infty$-algebra over $K$ and there is an $A_\infty$-quasi-isomorphism $\varphi:A^*\to B$ of strictly unital $A_\infty$-algebras over $K$. As a graded algebra over $K$, $A^*$ is the graded endomorphism algebra $\bigoplus_{p\in\mathbb{Z}}\Hom_{\cd(A)}(S,\Sigma^p S)$ of $S$. It follows from \cite[Theorem A.1(c)]{BruestleYang13} that $A^*$  is positive. In particular, it is augmented with augmentation map the projection $A^*\to (A^*)^0=K$.

\begin{theorem}
\label{t:augmentation-of-non-pos-dg-alg}
$A$ is quasi-equivalent to $E(A^*)$.
\end{theorem}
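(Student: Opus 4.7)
The plan is to apply the duality Lemma \ref{l:duality} with the module $M = X$, and then identify the two sides of the resulting quasi-equivalence with $A$ and $E(A^*)$ respectively. Setting $M = X$, the ambient $B$ of Section \ref{ss:duality} coincides with our $B = \cEnd_A(X)$. The condition $X \in \per(A)$ needed for the Lemma is immediate from (HS), since $X$ is quasi-isomorphic to $S \in \cd_{fd}(A) \subseteq \per(A)$.

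The crux of the argument is verifying the second hypothesis of Lemma \ref{l:duality}, namely $D(A) \in \Loc_{\cd(A)}(X)$. Using conditions (FD) and (BE), the standard t-structure on $\cd(A)$ coming from the non-positivity of $A$ has heart $\mod H^0(A)$, in which every module admits a finite composition series with simple factors among the $S_i$; hence $\cd_{fd}(A) \subseteq \thick(X) \subseteq \Loc(X)$. The dg module $D(A)$ has cohomology $H^p(D(A)) = D(H^{-p}(A))$, concentrated in non-negative degrees and finitely generated over $H^0(A)$ in each degree. Writing $D(A)$ as a homotopy limit of its t-structure truncations (each truncation lying in $\cd_{fd}(A) \subseteq \Loc(X)$) and invoking closure of $\Loc(X)$ under arbitrary coproducts and homotopy (co)limits, one concludes $D(A) \in \Loc_{\cd(A)}(X)$.

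Granting the hypotheses, Lemma \ref{l:duality} produces a quasi-equivalence $Y\colon \cEnd_A(D(A)) \to C$, where $C = \cEnd_B(N)$ and $N$ is an $\ch$-projective resolution of $M^* = D(\tilde X)$ over $B$. The source $\cEnd_A(D(A))$ is canonically quasi-isomorphic to $A$ itself via the right-multiplication map $a \mapsto (\phi \mapsto \phi({?}\cdot a))$. For the target, compute $H^p(D(X)) = D(H^{-p}(X))$: this vanishes for $p \neq 0$ and equals $D(S) = K$ at $p = 0$, while the $B$-action on the cohomology factors through $H^0(B) = \End_{\cd(A)}(S) = K$. Hence $D(X) \simeq K$ in $\cd(B)$, so $N$ is an $\ch$-projective resolution of $K$ over $B$; by the last paragraph of Section \ref{ss:koszul-duality}, $\cEnd_B(N)$ is a concrete model of the dual bar construction of $B$. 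Combined with the $A_\infty$-quasi-isomorphism $\varphi\colon A^* \to B$ and the $A_\infty$-invariance of $E$ from Section \ref{ss:A-infinity-algebra}(d), this identifies $C$ with $E(A^*)$. Stringing everything together yields $A \simeq \cEnd_A(D(A)) \simeq C \simeq E(A^*)$.

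The main obstacle is the verification that $D(A) \in \Loc_{\cd(A)}(X)$. Unlike the finite-dimensional case, $D(A)$ need not lie in $\cd_{fd}(A)$---its cohomology can be infinite-dimensional and spread across infinitely many positive degrees---so one must genuinely exploit closure of $\Loc(X)$ under coproducts and homotopy (co)limits to build $D(A)$ from shifts of $X$. A secondary technical point is bookkeeping the augmentation on $B$ (induced through the minimal model from that of $A^*$) so that $E(B)$ is defined in the same framework as $E(A^*)$.
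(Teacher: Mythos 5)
Your strategy coincides with the paper's: apply Lemma~\ref{l:duality} with $M=X$, verify $D(A)\in\Loc_{\cd(A)}(X)$ by writing $D(A)$ as a homotopy colimit of truncations lying in $\cd_{fd}(A)=\thick(S)$, observe that $A\simeq DD(A)=\cEnd_A(D(A))$ using finite-dimensionality of $H^p(A)$ in each degree, and then identify the other side of the duality with $E(A^*)$. (One small slip: $D(A)$ is the homotopy \emph{colimit}, not limit, of its truncations; localising subcategories are closed under coproducts and homotopy colimits, not limits.) Up to the last identification your argument is essentially the paper's.

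The substantive gap is in that last identification $\cEnd_B(N)\simeq E(A^*)$. You propose to ``bookkeep the augmentation on $B$, induced through the minimal model from that of $A^*$'', but $\varphi\colon A^*\to B$ is only an $A_\infty$-quasi-isomorphism, not a dg algebra homomorphism, so the strict augmentation $A^*\to K$ does not transport to a strict dg algebra map $B\to K$. Without such an augmentation $E(B)$ is not defined by the construction of Section~\ref{ss:koszul-duality}, and the last paragraph of that section (which you invoke to realize $\cEnd_B(N)$ as a dual bar construction) does not apply to $B$. This is precisely where the paper brings in the enveloping dg algebra $U=U(A^*)$ of Section~\ref{ss:A-infinity-algebra}(c): it is an \emph{augmented} dg algebra over $K$ with an $A_\infty$-quasi-isomorphism $\psi\colon A^*\to U$ of augmented $A_\infty$-algebras, so $E(A^*)\simeq E(U)=\cEnd_U(Z)$ for the bar resolution $Z$ of $K$. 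The universal property of $U$ then produces a strict dg algebra quasi-isomorphism $f\colon U\to B$ with $f\psi=\varphi$, and $Z\ten_U B$ is an $\ch$-projective resolution of $S^*=D(\tilde X)$ over $B$ to which Lemma~\ref{l:duality} applies. The enveloping dg algebra is the missing device that turns $\varphi$ into a strict map and supplies the augmentation your argument needs.
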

\begin{proof}
Let $U$ be the enveloping dg algebra of $A^*$ and $\psi:A^*\to U$ be the associated quasi-isomorphism of augmented $A_\infty$-algebras over $K$ (Section~\ref{ss:A-infinity-algebra}(c)). Then there is a quasi-isomorphism $E(A^*)\to E(U)$ of dg algebras over $K$ (Section~\ref{ss:A-infinity-algebra}(d)). Consider $K$ as a dg $U$-module via the augmentation map and let $Z$ be the bar resolution of $K$ over $U$ (Section~\ref{ss:koszul-duality}). Then $E(U)=\cEnd_U(Z)$.

By the universal property of $U$, there is a quasi-isomorphism $f:U\to B$ of dg algebras over $K$ such that $f\circ \psi=\varphi$. It induces a quasi-isomorphism $E(U)=\cEnd_U(Z)\to \cEnd_B(Z\ten_U B)$.

Let $\tilde{X}$ be an $\ch$-projective resolution of $X$ over $B^{op}\ten A$. Put $S^*=\cHom_A(\tilde{X},D(A))=D(\tilde{X})$, whose total cohomology is concentrated in degree $0$ and is isomorphic to $H^0(B)$ as a graded module over $H^*(B)$. So as a dg module over $U$ via the quasi-isomorphism $f:U\to B$, $S^*$ is quasi-isomorphic to $K$. As a consequence, $Z\ten_U B$ is an $\ch$-projective resolution of $S^*$ over $B$ and there is a quasi-isomorphism $\cEnd_U(Z)\to\cEnd_B(Z\ten_U B)$ of dg algebras. We claim that $D(A)\in\Loc_{\cd(A)}(S)$. Since $S\in\cd_{fd}(A)\subseteq \per(A)$, it follows from Lemma~\ref{l:duality} that $\cEnd_A(D(A))$ is quasi-equivalent to $\cEnd_B(Z\ten_U B)$, and hence to $E(A^*)$. Further, as $A$ has finite-dimensional cohomology in each degree by \cite[Proposition~2.5]{KalckYang16}, $A$ is quasi-isomorphic to $\cEnd_A(D(A))=DD(A)$. Therefore, $A$ is quasi-equivalent to $E(A^*)$.

To prove the claim, consider the chain of dg submodules
\[
\sigma^{\leq 0} D(A)\longrightarrow \sigma^{\leq 1} D(A)\longrightarrow\sigma^{\leq 2}D(A)\longrightarrow\ldots\longrightarrow D(A).
\]
We have $D(A)=\bigcup_{p\geq 0} \sigma^{\leq p}D(A)$, so $D(A)$ is the homotopy colimit of $\sigma^{\leq p}D(A)$, \ie there is a triangle
\[
\bigoplus_{p\geq 0}\sigma^{\leq p}D(A)\stackrel{\id-\textrm{shift}}{\longrightarrow}\bigoplus_{p\geq 0}\sigma^{\leq p}D(A)\longrightarrow D(A)\longrightarrow \Sigma \bigoplus_{p\geq 0}\sigma^{\leq p}D(A).
\]
Since $\sigma^{\leq p}D(A)$ belongs to $\cd_{fd}(A)=\thick_{\cd(A)}(S)$ (the equality is a consequence of \cite[Proposition~2.1(b)]{KalckYang16}), it follows that $D(A)\in\Loc_{\cd(A)}(S)$.
\end{proof}

We remark that $E(A^*)$ has the form $(\widehat{kQ},d)$, which is a complete dg-quiver algebra. The graded quiver $Q$ is determined by the graded $K$-bimodule structure on $A^*=\bigoplus_{p\in\mathbb{Z}}\Hom_{\cd(A)}(S,\Sigma^p S)$. More precisely, $Q_0=\{1,\ldots,r\}$ and the number of arrows from $i$ to $j$ in degree $p$ is the dimension of $\Hom_{\cd(A)}(S_j,\Sigma^{1-p}S_i)$ over $K$. In particular, $Q$ is finite and concentrated in non-positive degrees. The differential $d$ is continuous and is determined by the $A_\infty$-structure on $A^*$.

\subsection{Trivial extensions}
Assume that $k$ is a field.
Let $Q$ be a finite tree quiver, and let $A=kQ/\rad^2 kQ$. For an $A$-bimodule $M$, define the trivial extension $A\ltimes M$ of $A$ by $M$ as follows. As a vector space it is $A\oplus M$. The multiplication is given by
\[
(a,m)(a',m')=(aa',am'+ma').
\]

Let $\lambda,\mu\colon Q_1\to k^{\times}$ be two functions. 
For an $A$-bimodule $M$, define a new bimodule ${}^\lambda M^\mu$ by
\[
\alpha\cdot m=\lambda(\alpha)\alpha m,~~m\cdot\alpha=\mu(\alpha)m\alpha,~~\text{where }m\in M,\alpha\in Q_1.
\]
Put $A(Q,\lambda,\mu)=A\ltimes {}^\lambda D(A)^\mu$. Precisely, as a $k$-vector space $A(Q,\lambda,\mu)$ has basis
\[
e_i,~e_i^* (i\in Q_0), ~~\alpha,~\alpha^* (\alpha\in Q_1), 
\]
and the product of two basis elements are zero except for $i\in Q_0$
\begin{align*}
e_i^2=e_i, ~~ e_ie_i^*=e_i^*e_i=e_i^*, 
\end{align*}
anf for $\alpha\in Q_1$
\begin{align*}
&e_{t(\alpha)}\alpha=\alpha e_{s(\alpha)}=\alpha, ~~e_{s(\alpha)}\alpha^*=\alpha^*e_{t(\alpha)}=\alpha^*,\\
&\alpha^*\alpha=\mu(\alpha)e_{s(\alpha)}^*, \alpha\alpha^*= \lambda(\alpha)e_{t(\alpha)}^*.
\end{align*}
For $d\geq 2$, let $A_d(Q,\lambda,\mu)$ be the graded algebra whose underlying algebra is $A(Q,\lambda,\mu)$ and whose grading is given by $|e_i|=0$ and $|e_i^*|=d+1$ for $i\in Q_0$ and $|\alpha|=1$ and $|\alpha^*|=d$ for $\alpha\in Q_1$.

\begin{lemma}
\label{lem:formality-of-trivial-extension}
Let $Q$ be a finite tree quiver, $d\geq 2$ and $\lambda,\mu\colon Q_1\to k^\times$ be two functions. Let $B$ be a strictly unital minimal $A_\infty$-algebra over $K=kQ_0$ such that its graded algebra $(B,m_2)$ is isomorphic to $A_d(Q,\lambda,\mu)$. Then $m_n=0$ for $n\geq 3$.
\end{lemma}
\begin{proof}
We first show that $m_n|_{(B^1)\ten_K n}\colon ((B^1)^{\ten_K n} \to (B^*)^2$ is trivial. This is obvious for $d\geq 3$ because $B^2=0$. For $d=2$, $m_n(\alpha_1\ten\cdots\ten\alpha_n)\neq 0$ for $\alpha_1,\ldots,\alpha_n\in Q_1$ implies that $\alpha_1\cdots\alpha_n$ is a path and there exists $\alpha\in Q_1$ such that $s(\alpha_n)=t(\alpha)$ and $t(\alpha_1)=t(\alpha)$. This contradicts the assumption that $Q$ is a tree.
Now for degree reasons and due to the form of $B^{d+1}$, the only possible non-trivial multiplication on $B^{\geq 1}$ is of the form
\[
m_n(\alpha_1\ten\cdots\ten\alpha_{s-1}\ten\alpha_s^*\ten\alpha_{s+1}\ten\cdots\ten\alpha_n),
\]
where $\alpha_{s+1}\cdots\alpha_n\alpha_1\cdots\alpha_{s-1}$ is a path parallel to $\alpha_s$. Since $Q$ is a tree quiver, this is not possible unless $n=2$ and $\alpha_1=\alpha_2$.
\end{proof}

\begin{lemma}
\label{lem:no-deformation-trivial-extension} Let $Q$ be a finite tree quiver.
Then $A(Q,\lambda,\mu)$ is isomorphic to $A(Q,\mathbf{1},\mathbf{1})$, where $\mathbf{1}\colon Q_1\to k^\times$ is the function with constant value $1$.
\end{lemma}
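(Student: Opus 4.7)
The plan is to construct an explicit algebra isomorphism $\phi\colon A(Q,\lambda,\mu)\to A(Q,\mathbf{1},\mathbf{1})$ that rescales the generators. I would look for $\phi$ fixing the decomposition coming from the idempotents of $R$ and acting by nonzero scalars on arrows and on the chosen basis of $D(R)$, with the tree hypothesis being used exactly to solve the resulting system consistently.

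Concretely, let $\{e_i\}_{i\in Q_0}$ and $\{\alpha\}_{\alpha\in Q_1}$ be the standard basis of $R$, and let $\{e_i^*\}\cup\{\alpha^*\}$ denote the dual basis of $D(R)$. A short direct computation (using $\rad^2(R)=0$) shows that in $D(R)$ with its \emph{standard} bimodule structure, the only nonzero products between $R$ and $D(R)$ are
\[
\alpha\cdot\alpha^*=e^*_{t(\alpha)},\qquad \alpha^*\cdot\alpha=e^*_{s(\alpha)},
\]
together with the idempotent actions $e_{t(\alpha)}\cdot\alpha^*\cdot e_{s(\alpha)}=\alpha^*$ and $e_j\cdot e_j^*\cdot e_j=e_j^*$. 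Consequently, in $A(Q,\lambda,\mu)$ these become $\alpha\cdot\alpha^*=\lambda(\alpha)e^*_{t(\alpha)}$ and $\alpha^*\cdot\alpha=\mu(\alpha)e^*_{s(\alpha)}$, while all other products of two elements of $D(R)$, or of an arrow with a non-matching dual arrow, remain zero.

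I would then posit the ansatz $\phi(e_i)=e_i$, $\phi(\alpha)=c_\alpha\,\alpha$, $\phi(\alpha^*)=d_\alpha\,\alpha^*$, $\phi(e_i^*)=f_i\,e_i^*$ with $c_\alpha,d_\alpha,f_i\in k^\times$. Comparing images of the two nontrivial relations forces
\[
c_\alpha d_\alpha=\lambda(\alpha)\,f_{t(\alpha)}=\mu(\alpha)\,f_{s(\alpha)},
\]
\emph{i.e.} $f_{t(\alpha)}/f_{s(\alpha)}=\mu(\alpha)/\lambda(\alpha)$ for every arrow $\alpha$. This is precisely the step where the tree assumption enters: the underlying undirected graph has no cycles, so one may fix $f_{i_0}=1$ at an arbitrary vertex $i_0$ of each connected component and propagate the values of $f_i$ along the unique undirected path from $i_0$ to $i$ without any monodromy obstruction. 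Once the $f_i$ are chosen, one may set, for example, $c_\alpha=1$ and $d_\alpha=\lambda(\alpha)f_{t(\alpha)}$.

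It remains to verify that $\phi$ extends to a well-defined $k$-algebra map, which reduces to checking the defining relations of $A(Q,\lambda,\mu)$: the idempotent relations and path-length relations in $R$ are preserved since $\phi$ is diagonal on the chosen basis, the vanishing $M\cdot M=0$ in a trivial extension is preserved since both $\alpha^*\beta^*$ and $e_i^*e_j^*$ (and similar) products are zero on both sides, and the two mixed relations above were arranged by the choice of scalars. Since all scaling factors are in $k^\times$, $\phi$ is bijective. The only subtle point, which I would flag as the crux, is the consistent choice of the $f_i$ along the tree; everything else is formal bookkeeping.
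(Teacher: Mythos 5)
Your proposal is correct and takes essentially the same approach as the paper: a diagonal rescaling determined by propagating scalar factors along the underlying tree, the absence of cycles guaranteeing consistency. The paper packages this as an iterative "fix one vertex at a time" reduction via intermediate pairs $(\lambda',\mu')$ rather than your single global isomorphism, but the computation is the same. One small slip to flag: consistency with $\alpha\cdot\alpha^*=e^*_{t(\alpha)}$ forces the idempotent relation to read $e_{s(\alpha)}\cdot\alpha^*\cdot e_{t(\alpha)}=\alpha^*$ rather than $e_{t(\alpha)}\cdot\alpha^*\cdot e_{s(\alpha)}=\alpha^*$; this does not affect the constraint $c_\alpha d_\alpha=\lambda(\alpha)f_{t(\alpha)}=\mu(\alpha)f_{s(\alpha)}$ you derive, which is correct, nor the rest of the argument.
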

\begin{proof}
It is enough to show that as an $A$-bimodule ${}^\lambda D(A)^\mu$ is isomorphic to $D(A)$.

Fix a vertex $i\in Q_0$. Define $\lambda',\mu'\colon Q_1\to k^\times$ by
\[
\lambda'(\beta)=\begin{cases} \lambda(\beta) & \text{if } t(\beta)\neq i\\ 1 & \text{if } t(\beta)=i \end{cases},\qquad \mu'(\beta)=\begin{cases} \mu(\beta) & \text{if } s(\beta)\neq i\\ 1 & \text{if } s(\beta)=i \end{cases}.
\]
We claim that ${}^\lambda D(A)^\mu\cong {}^{\lambda'} D(A)^{\mu'}$ as $A$-bimodules. Then fixing an ordering $i_1\cdots i_r$ of $Q_0$ and repeatedly applying the claim we obtain the desired result.

Now we prove the claim. Because $Q$ is a tree, there is at most one walk from any vertex to $i$. For $j\in Q_0$, put 
\[
f(j)=\begin{cases} \lambda(\alpha) & \text{if there is a walk from $j$ to $i$ ending with an arrow $\alpha$},\\
\mu(\alpha) & \text{if there is a walk from $j$ to $i$ ending with the inverse of an arrow $\alpha$},\\
1 & \text{otherwise.}
\end{cases}
\]
For $\beta\in Q_1$, put
\[
f(\beta)=\begin{cases} f(t(\beta)) & \text{if } t(\beta)\neq i,\\
f(s(\beta)) & \text{if } s(\beta)\neq i,\\
\mu(\beta) & \text{if } s(\beta)=i,\\
\lambda(\beta) & \text{if } t(\beta)=i.
\end{cases}
\]
Let $\{e_j^*|j\in Q_0\}\cup \{\beta^*|\beta\in Q_1\}\subseteq D(A)$ be the dual basis of $\{e_i|i\in Q_0\}\cup\{\beta|\beta\in Q_1\}$. Let $\varphi\colon {}^\lambda D(A)^\mu\cong {}^{\lambda'} D(A)^{\mu'}$ be the linear extension of $\beta^*\mapsto f(\beta)\beta^*$ and $e_j^*\mapsto f(j)e_j^*$. It is straightforward to check that $\varphi$ is an $A$-bimodule isomorphism.
\end{proof}

\section{Silting reduction}
\label{s:silting-reduction}

\newcommand{\emp}{\cd}
Let $\ce$ be a Frobenius $k$-category and $\cp$ the full subcategory of projective-injective objects of $\ce$. Put $\emp=\ch^b(\ce)/\ch^b(\cp)$.  The following proposition is the main result of this section.

\begin{proposition}\label{prop:silting-reduction}
Let $\cm$ be an additive subcategory of $\ce$ containing $\cp$. Then the essential image of the composite functor $\cm\to\ch^b(\cm)\to\ch^b(\cm)/\ch^b(\cp)$ is a silting subcategory and is equivalent to the additive quotient $\frac{\cm}{\cp}$.
\end{proposition}

Proposition~\ref{prop:silting-reduction} is an immediate consequence of the following lemma, which also allows us to compute the extension spaces in negative degrees of objects of $\cm$ in $\ch^b(\cm)/\ch^b(\cp)$ in terms of morphism spaces in $\underline{\ce}$.

\begin{lemma}\label{lem:silting-reduction}
For $X,Y\in\ce$, we have
\begin{align*}
\Hom_{\emp}(X,\Sigma^n Y)&=0 \text{ for } n>0,\\
\Hom_{\emp}(X, Y)&=\Hom_{\underline{\ce}}(X,Y),\\
\Hom_{\emp}(X,\Sigma^{n} Y)&=\Hom_{\underline{\ce}}(X,\Omega^{-n}(Y)) \text{ for } n<0.
\end{align*}
\end{lemma}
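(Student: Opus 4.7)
The plan is to compute $\Hom_\emp(X,\Sigma^n Y)$ via the Verdier calculus of fractions for $\emp = \ch^b(\ce)/\ch^b(\cp)$, realized through a cofinal system of roofs built from an injective coresolution of $X$ in the Frobenius category $\ce$.

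First I would fix an injective coresolution $0\to X\xrightarrow{\iota} I^0\to I^1\to\ldots$ in $\ce$ with each $I^k\in\cp$, and for every $m\geq 1$ set $C_m := [I^0\to\ldots\to I^{m-1}]\in\ch^b(\cp)$ placed in degrees $[0,m-1]$. Letting $\phi_m\colon X\to C_m$ be the chain map with $\phi_m^0=\iota$ and zero elsewhere, the shifted cone $Z_m := \Sigma^{-1}\Cone(\phi_m)$ sits in a triangle $Z_m\xrightarrow{s_m} X\xrightarrow{\phi_m} C_m\to\Sigma Z_m$ in $\ch^b(\ce)$. Concretely $Z_m$ is the complex $X\xrightarrow{\iota} I^0\to\ldots\to I^{m-1}$ in degrees $[0,m]$, and $s_m$ is the degree-$0$ projection onto $X$; since $\Cone(s_m)=C_m\in\ch^b(\cp)$, each $(Z_m,s_m)$ is a legal denominator.

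Next I would argue $\{(Z_m,s_m)\}_m$ is cofinal, so that $\Hom_\emp(X,\Sigma^n Y) = \colim_{m} \Hom_{\ch^b(\ce)}(Z_m, \Sigma^n Y)$. Given any roof $(Z,s)$ with cone $C\in\ch^b(\cp)$ concentrated in degrees $\leq b$ and connecting map $\phi\colon X\to C$, the composition $\phi s_m\colon Z_m\to C$ has single nonzero component $\phi^0\colon X\to\ker d_C^0\subseteq C^0$. For $m>b$ I would inductively build a null-homotopy $\{h^p\colon Z_m^p\to C^{p-1}\}$: set $h^0=0$ and extend $\phi^0$ to $h^1\colon I^0\to C^0$ along the admissible mono $\iota$ using injectivity of $C^0\in\cp$; at each step $p\geq 1$ the map $-d_C^{p-1}h^p$ vanishes on the image of the incoming differential of $Z_m$ by $(d_C)^2=0$, hence factors through the admissible mono $\Omega^{-p}X\hookrightarrow I^p$ and extends to $h^{p+1}\colon I^p\to C^p$ by injectivity of $C^p\in\cp$; once $p>b$ the target $C^p$ vanishes, so the terminal condition $d_C^{m-1}h^m=0$ is automatic. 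The null-homotopy yields a lift $\alpha_m\colon Z_m\to Z$ with $s\alpha_m=s_m$.

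The three formulas then follow by inspecting chain maps $Z_m\to\Sigma^n Y$, noting that $\Sigma^n Y$ is the stalk of $Y$ in degree $-n$. When $n>0$, $-n$ falls outside the support $[0,m]$ of $Z_m$, so chain maps and homotopies both vanish. When $n=0$, a chain map is a single morphism $\chi^0\colon X=Z_m^0\to Y$, and the only potentially nonzero homotopy component $h^1\colon I^0\to Y$ makes $\chi^0\sim 0$ precisely when $\chi^0$ factors through $\iota\colon X\to I^0\in\cp$; since any $X\to Y$ through some $P\in\cp$ can be routed through $I^0$ by injectivity of $P$, the colimit is $\Hom_{\underline{\ce}}(X,Y)$. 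When $n<0$, set $n'=-n$ and take $m>n'$: a chain map is a morphism $\chi^{n'}\colon I^{n'-1}\to Y$ annihilated by the preceding differential of $Z_m$, hence factoring through the quotient $I^{n'-1}\twoheadrightarrow\Omega^{-n'}X$; the homotopy relation quotients by maps $\Omega^{-n'}X\to Y$ factoring through the admissible mono $\Omega^{-n'}X\hookrightarrow I^{n'}\in\cp$, producing $\Hom_{\underline{\ce}}(\Omega^{-n'}X,Y) = \Hom_{\underline{\ce}}(X,\Omega^{n'}Y) = \Hom_{\underline{\ce}}(X,\Omega^{-n}Y)$ via the shift relation $\Sigma_{\underline{\ce}}=\Omega^{-1}$. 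The main obstacle is the cofinality step: the simultaneous compatibility of the inductive extensions with the differentials of both $Z_m$ and $C$ relies essentially on admissibility of the syzygy inclusions $\Omega^{-p}X\hookrightarrow I^p$ and injectivity of each $C^p$ in the Frobenius structure.
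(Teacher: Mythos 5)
Your proof is correct and rests on the same key ingredients as the paper's: the injective coresolution of $X$ in the Frobenius structure, the associated truncated complexes $Z_m = [X\to I^0\to\cdots\to I^{m-1}]$ as canonical roof denominators, and repeated use of injectivity of $\cp$ to extend along admissible monos. The organizations differ, however. The paper proceeds by a chain of reductions: first replacing an arbitrary denominator $Z$ by one supported in $[0,l]$ with $X$ in degree $0$ (Step~1), then brutally truncating to $[0,-n]$ (Step~3), then replacing the resulting complex by $X(-n)=Z_{-n}$ via an inductive chain lift (Step~4), and finally, in Step~5, identifying by hand the kernel of the surjection $\Hom_\ce(K^{-n},Y)\twoheadrightarrow\Hom_\cd(X,\Sigma^n Y)$ with morphisms factoring through $\cp$. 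You instead prove cofinality of the whole tower $\{(Z_m,s_m)\}$ in one stroke by directly building, for each roof $(Z,s)$ with cone $C$, a null-homotopy of $\phi\circ s_m:Z_m\to C$ inductively using injectivity of the $C^p$, then compute $\colim_m\Hom_{\ch^b(\ce)}(Z_m,\Sigma^nY)$. An added slickness in your final step is to take $m>-n$ rather than $m=-n$: the extra term $I^{-n}$ sitting in degree $-n+1$ makes homotopies visible at the level of chain maps, so the quotient by maps factoring through $\cp$ falls out of the homotopy relation for free, avoiding the explicit Step~5 kernel analysis. Conversely, the paper's Step~1 and Step~3 intermediate simplifications are not strictly necessary in your version, though they give a useful structural picture of the denominators. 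Both routes are sound; yours packages the content of Steps~1--5 as a single cofinality-plus-colimit computation.
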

\begin{proof} The first formula will be proved in Step 2. The second and third formulas will be proved in Step 5.

Step 1: \emph{In a left fraction $X\stackrel{s}{\leftarrow} Z\stackrel{f}{\rightarrow} \Sigma^n Y$, where $\Cone(s)\in\ch^b(\cp)$, up to equivalence we can take $Z$ to be a complex of the form
\[
X\stackrel{\alpha}{\to} P^0\stackrel{d^0}{\to} P^1\to\ldots\stackrel{d^{l-1}}{\to} P^l,
\]
where $P^i\in\cp$ and $X$ is put in degree $0$, and take $s$ to be the natural projection from $Z$ to $X$.} 

Form a triangle
\[
Z\stackrel{s}{\to} X\stackrel{u}{\to} P\to \Sigma Z,
\]
where $P\in\ch^b(\cp)$ is of the form
\[
P^{m}\stackrel{d^m}{\to} \ldots\to P^{-1}\stackrel{d^{-1}}{\to} P^0\stackrel{d^0}{\to} P^1\to\ldots \to P^l.
\]
The morphism $u\colon X\to P$ is represented by a chain map $X\to P$, which is given by a morphism $\alpha\colon X\to P^0$ in $\ce$ such that $d^0\circ\alpha=0$:
\[
\xymatrix{
& & & X\ar[d]^{\alpha} &&&\\
P^m\ar[r]^{d^m} &\ldots \ar[r] & P^{-1}\ar[r]^{d^{-1}} & P^0\ar[r]^{d^0} & P^1\ar[r] &\ldots\ar[r] & P^l  
}
\]
So up to isomorphism $Z=\Sigma^{-1}\Cone(u)$ takes the form
\[
\xymatrix{
P^m\ar[r]^{d^m} &\ldots \ar[r]^(0.4){{0\choose d^{-2}}} & X\oplus P^{-1}\ar[r]^(0.6){(\alpha,d^{-1})} & P^0\ar[r]^{d^0} & P^1\ar[r] &\ldots\ar[r] & P^l  
}
\]
and $s$ is of the form
\[
\xymatrix{
P^m\ar[r]^{d^m} &\ldots \ar[r]^(0.4){{0\choose d^{-2}}} & X\oplus P^{-1}\ar[d]^{(\id_X,0)}\ar[r]^(0.6){(\alpha,d^{-1})} & P^0\ar[r]^{d^0} & P^1\ar[r] &\ldots\ar[r] & P^l  \\
& & X & & & &
}
\]
The complex $Z'$
\[
\xymatrix{
X\ar[r]^{\alpha} & P^0\ar[r]^{d^0} & P^1\ar[r] &\ldots\ar[r] & P^l  
}
\]
is a subcomplex of $Z$. The inclusion $Z'\hookrightarrow Z$ splits in each degree and the quotient complex lies in $\ch^b(\cp)$. Let $t\colon Z'\to X$ be the natural projection and $g\colon Z'\hookrightarrow Z\stackrel{f}{\to} \Sigma^n Y$. Then $X\stackrel{s}{\leftarrow} Z\stackrel{f}{\to} \Sigma^n Y$ is equivalent to $X\stackrel{t}{\leftarrow} Z'\stackrel{g}{\to} \Sigma^n Y$.

Step 2: Let $X\stackrel{s}{\leftarrow} Z\stackrel{f}{\to} \Sigma^n Y$ be as in the statement of Step 1. If $n>0$, then $\Hom_{\ch^b(\ce)}(Z,\Sigma^n Y)=0$, and hence $\Hom_{\emp}(X,\Sigma^n Y)=0$.

Step 3: \emph{Let $n\leq 0$. In a left fraction $X\stackrel{s}{\leftarrow} Z\stackrel{f}{\rightarrow} \Sigma^n Y$, where $\Cone(s)\in\ch^b(\cp)$, up to equivalence we can take $Z$ to be a complex of the form
\[
X\stackrel{\alpha}{\to} P^0\stackrel{d^0}{\to} P^1\to\ldots\stackrel{d^{-n-2}}{\to} P^{-n-1},
\]
where $P^i\in\cp$ and $X$ is put in degree $0$, and take $s$ to be the natural projection from $Z$ to $X$.}

Let $X\stackrel{s}{\leftarrow} Z\stackrel{f}{\rightarrow} \Sigma^n Y$ be as in the statement of Step 1. Let $Z''$ be the complex
\[
X\stackrel{\alpha}{\to} P^0\stackrel{d^0}{\to} P^1\to\ldots\stackrel{d^{-n-2}}{\to} P^{-n-1}.
\]
Then the quotient map $Z\to Z''$ splits in each degree and its kernel is in $\ch^b(\cp)$. Moreover, any chain map $g\colon Z\to \Sigma^n Y$ is given by a morphism $\beta\colon P^{-n-1}\to Y$ in $\ce$ such that $\beta\circ d^{-n-2}=0$:
\[
\xymatrix{
X\ar[r]^{\alpha} & P^0\ar[r]^{d^0} & P^1\ar[r] &\ldots\ar[r]^{d^{-n-2}} & P^{-n-1}\ar[r]\ar[d]^\beta & \ldots \ar[r]^{d^{l-1}} & P^l\\
& & & & Y & &  
}
\]
and factors through the quotient map $Z\to Z''$ to yield a chain map $g\colon Z''\to Y$
\[
\xymatrix{
X\ar[r]^{\alpha}\ar[d]^{\id} & P^0\ar[r]^{d^0} \ar[d]^{\id}& P^1\ar[r] \ar[d]^{\id}&\ldots\ar[r]^{d^{-n-2}} & P^{-n-1}\ar[r]\ar[d]^{\id} & \ldots \ar[r]^{d^{l-1}} & P^l\\
X\ar[r]^{\alpha} & P^0\ar[r]^{d^0} & P^1\ar[r] &\ldots\ar[r]^{d^{-n-2}} & P^{-n-1}\ar[d]^\beta\\
& & & & Y & &  
}
\]
Let $t\colon Z''\to X$ be the natural projection. Then $X\stackrel{s}{\leftarrow} Z\stackrel{f}{\rightarrow} \Sigma^n Y$ is equivalent to $X\stackrel{t}{\leftarrow} Z''\stackrel{g}{\rightarrow} \Sigma^n Y$.

Step 4: Because $\ce$ has enough projective-injective objects, there is a conflation $X\stackrel{\iota^0}{\to} I_X^0\stackrel{\pi^0}{\to} K^1$ with $I_X^0\in\cp$. Put $K^0=X$. By induction, there exist objects $K^i$ ($i\in\mathbb{N}$) and conflations $K^i\stackrel{\iota^i}{\to} I_X^i\stackrel{\pi^i}{\to} K^{i+1}$ with $I_X^i\in\cp$. Put $X(0)=X$ and let $X(l)$ ($l\geq 1$) be the complex
\[
X\stackrel{\alpha_X}{\to} I_X^0\stackrel{d_X^0}{\to} I_X^1\to \ldots \stackrel{d_X^{l-2}}{\to} I_X^{l-1},
\]
where $\alpha_X=\iota^0$ and $d_X^i=\iota^{i+1}\circ \pi^i$.

\emph{Let $n\leq 0$. In a left fraction $X\stackrel{s}{\leftarrow} Z\stackrel{f}{\rightarrow} \Sigma^n Y$, where $\Cone(s)\in\ch^b(\cp)$, up to equivalence we can take $Z=X(-n)$ and take $s$ to be the natural projection from $Z$ to $X$.}

Let $X\stackrel{s}{\leftarrow} Z\stackrel{f}{\rightarrow} \Sigma^n Y$ be as in the statement of Step 3. Then there is a chain map
\[
\xymatrix{
X\ar[r]^{\iota^0}\ar[d]^{\id} & I_X^0 \ar[r]^{\pi^0} \ar[d] & K^1\ar[d] \\
X\ar[r]^{\alpha} & P^0\ar[r]^{d_0} & P^1\ar[r] &\ldots\ar[r]^{d^{-n-2}} & P^{-n-1}
}
\]
and by induction we obtain a chain map $u\colon X(-n)\to Z$
\[
\xymatrix{
X\ar[r]^{\alpha_X}\ar[d]^{\id} & I_X^0 \ar[r]^{d_X^0} \ar[d] & I_X^1\ar[d]\ar[r] & \ldots \ar[r]^{d_X^{-n-2}} & I_X^{-n-1}\ar[d] \\
X\ar[r]^{\alpha} & P^0\ar[r]^{d_0} & P^1\ar[r] &\ldots\ar[r]^{d^{-n-2}} & P^{-n-1}
}
\]
Let $t\colon X(-n)\to X$ be the natural projection and let $g=f\circ u$. Then $X\stackrel{s}{\leftarrow} Z\stackrel{f}{\rightarrow} \Sigma^n Y$ is equivalent to $X\stackrel{t}{\leftarrow} X(-n)\stackrel{g}{\rightarrow} \Sigma^n Y$.

Step 5: Let $n\geq 0$. A chain map $X(-n)\to \Sigma^n Y$ is given by a morphism $\beta\colon I_X^{-n-1}\to Y$ in $\ce$ such that $\beta\circ d_X^{-n-2}=0$:
\[
\xymatrix{
X\ar[r]^{\alpha_X} & I_X^0 \ar[r]^{d_X^0}  & I_X^1\ar[r] & \ldots \ar[r]^{d_X^{-n-2}} & I_X^{-n-1}\ar[d]^\beta \\
& & & & Y
}
\]
and thus it is in bijection with morphisms $\gamma\colon K^{-n}\to Y$. Therefore by Step 4, there is a surjective map
\[
\Hom_\ce(K^{-n},Y)\longrightarrow \Hom_\emp(X,\Sigma^n Y).
\]
We claim that the kernel consists of the morphisms factoring though a projective-injective, which completes the proof.

Let $\gamma\colon K^{-n}\to Y$ be in the kernel of the above map, and $X\stackrel{s}{\leftarrow} X(-n) \stackrel{f}{\rightarrow} \Sigma^n Y$ be the corresponding left fraction. Then there exists a chain $t\colon Z\to X(-n)$ such that $f\circ t=0$, \ie $\gamma\circ\pi^{-n-1}\circ t^{-n}=0$.
Following the constructions in Steps 1, 3 and 4, we obtain a diagram of chain maps
\[
\xymatrix{
& Z'\ar[dl]\ar[dr] & & X(-n)\ar[dl]_{u} \\
Z\ar[dr]^t & & Z''\ar[dl]_{t'}\\
&X(-n)&
}
\]
such that the square is commutative and $t'^0\circ u^0=\id_X$. So  there is a chain map
\[
\xymatrix{
X\ar[r]^{\alpha_X}\ar[d]^{\id} & I_X^0 \ar[r]^{d_X^0} \ar[d]^{t^1\circ u^1} & I_X^1\ar[r]\ar[d]^{t^2\circ u^2} & \ldots \ar[r]^{d_X^{-n-2}} & I_X^{-n-1}\ar[d]^{t^{-n}\circ u^{-n}}\\
X\ar[r]^{\alpha_X} & I_X^0 \ar[r]^{d_X^0}  & I_X^1\ar[r] & \ldots \ar[r]^{d_X^{-n-2}} & I_X^{-n-1}
}
\]
Therefore there exist morphisms $a\colon I_X^{-n-1}\to I_X^{-n-2}$ and $b\colon I_X^{-n}\to I_X^{-n-1}$ such that $\id_{I_X^{-n-1}}-t^{-n}\circ u^{-n}=d_X^{-n-2}\circ a+b\circ d_X^{-n-1}$. So 
\begin{align*}
\gamma\circ\pi^{-n-1}&=\gamma\circ\pi^{-n-1}\circ(\id_{I_X^{-n-1}}-t^{-n}\circ u^{-n})\\
&=\gamma\circ\pi^{-n-1}\circ(d_X^{-n-2}\circ a+b\circ d_X^{-n-1})\\
&=\gamma\circ\pi^{-n-1}\circ b\circ d_X^{-n-1}\\
&=\gamma\circ \pi^{-n-1}\circ b\circ \iota^{-n}\circ\pi^{-n-1}.
\end{align*} Since $\pi^{-n-1}$ is an epimorphism, it follows that $\gamma=\gamma\circ \pi^{-n-1}\circ b\circ \iota^{-n}$, which factors through $I_X^{-n}$.
\end{proof}

\section{Relative singularity category}
\label{s:relative-singularity-category}

Let $k$ be an algebraically closed field 
 and $d\geq 1$. 
A Hom-finite Krull--Schmidt triangulated $k$-category $\cc$ is said to be \emph{$d$-Calabi--Yau} if $\Sigma^d$ is a Serre functor, that is,
there is a bifunctorial isomorphism for $M,N\in\cc$
\[D\Hom(M,N)\cong \Hom(N,\Sigma^d M).\]

Let $\ce$ be a Frobenius $k$-category, $\cp$ its full subcategory of projective-injective objects and $\cc=\underline{\ce}$ the stable category. Assume that $\cc$ is a Hom-finite Krull--Schmidt $d$-Calabi--Yau algebraic triangulated category, and $T\in\cc$ a basic $d$-cluster-tilting object. 
Then $T$ is a classical generator of $\cc$, by~\cite[Theorem 5.4 (a)]{KellerReiten07}.

\begin{theorem}\label{thm:cluster-resolution}
There is a dg $k$-algebra $B$ such that
\begin{itemize}
\item[(a)] $H^n(B)=0$ for $n>0$, $H^0(B)\cong \End_\cc(T)$ and $H^n(B)\cong \Hom_\cc(T,\Sigma^{n}T)$ for $n<0$;
\item[(b)] $\per(B)\supseteq\cd_{fd}(B)$;
\item[(c)] there is a triangle equivalence $\per(B)/\cd_{fd}(B)\to \cc$ which takes $B$ to $T$;
\item[(d)] there is a bifunctorial isomorphism for $X\in\mod H^0(B)$ and $Y\in\cd_{fd}(B)$
\[
D\Hom_{\cd_{fd}(B)}(X,Y)\cong \Hom_{\cd_{fd}(B)}(Y,\Sigma^{d+1}X).
\]
\end{itemize}
\end{theorem}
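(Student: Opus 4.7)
The strategy is to realize $B$ as a dg endomorphism algebra in a dg enhancement of the silting reduction $\emp = \ch^b(\ce)/\ch^b(\cp)$ of Section~\ref{s:silting-reduction}, and then to transfer the structural properties of $\emp$ to $B$ via silting theory and Keller's dg Morita theorem.

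Let $P$ be a basic additive generator of $\cp$ and $M = T\oplus P$. Fix the standard dg enhancement of $\ch^b(\ce)$ given by bounded complexes with their componentwise Hom-complexes, form the Drinfeld dg quotient by $\ch^b(\cp)$ to obtain a pretriangulated dg category $\tilde\emp$ whose homotopy category is $\emp$, and define $B := \cEnd_{\tilde\emp}(\bar M)$, where $\bar M$ is the image of $M$. Since $P$ is contractible in $\tilde\emp$, $B$ is quasi-isomorphic to $\cEnd_{\tilde\emp}(\bar T)$, and Lemma~\ref{lem:silting-reduction} applied to $T\in\ce$ computes $H^p(B)$ exactly as in~(a). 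By Corollary~\ref{cor:silting-reduction}, $\bar T$ is silting in $\emp$, and because $T$ is $d$-cluster-tilting every object of $\ce$ admits an $\add M$-resolution of length at most $d$, so its image in $\emp$ lies in $\thick_\emp(\bar T)$. Hence $\thick_\emp(\bar T) = \emp$, and Keller's dg Morita theorem yields a triangle equivalence $F\colon \per(B)\xrightarrow{\sim}\emp$ sending $B$ to $\bar T$.

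To finish (b) and (c) I identify $\cd_{fd}(B)\subseteq \per(B)$ with the kernel $\ck$ of the canonical localisation $\emp\twoheadrightarrow \cc$; the target is indeed $\cc$ because $\cc=\underline{\ce}\simeq \cd^b(\ce)/\ch^b(\cp)$ and $\emp=\ch^b(\ce)/\ch^b(\cp)$, so the quotient is by the image of $\ch^b_{\acyc}(\ce)$ in $\emp$. Since $B$ is non-positive and $H^0(B)\cong\End_\cc(T)$ is finite-dimensional, $\cd_{fd}(B)$ is the thick closure of the finitely many simple $H^0(B)$-modules, one for each indecomposable summand $T_i$ of $T$. I will exhibit each such simple as a perfect $B$-module whose image under $F$ arises from a short exact sequence in $\ce$ realizing the $d$-cluster-tilting exchange at $T_i$; summed over $i$ these images generate $\ck$. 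This simultaneously proves $\cd_{fd}(B)\subseteq\per(B)$, giving (b), and identifies $\per(B)/\cd_{fd}(B)$ with $\emp/\ck \simeq \cc$, giving (c) with $B\mapsto T$.

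For (d), the $d$-Calabi--Yau duality on $\cc$ transported along the induced functor $\per(B)/\cd_{fd}(B)\xrightarrow{\sim}\cc$ gives a Serre-type pairing between $\per(B)$ and itself modulo $\cd_{fd}(B)$; combined with the Frobenius conflations $Y\to I_Y\to \Omega^{-1}Y$ linking $\ce$ and $\cc$ this produces one extra degree, upgrading the pairing to a $(d+1)$-Calabi--Yau duality between $\mod H^0(B)$ and $\cd_{fd}(B)$ of the stated form. Alternatively this duality can be obtained from Lemma~\ref{l:duality} applied to $B$ and its generator $S=H^0(B)/\rad H^0(B)$, using the Koszul-type identification of Theorem~\ref{t:augmentation-of-non-pos-dg-alg} which is available because $B$ satisfies (FD), (BE) and (HS). The main obstacle is the step matching $\cd_{fd}(B)$ with $\ck$: one must reconcile the intrinsic dg-algebraic description (finite-dimensional total cohomology) with the extrinsic one (thick closure of images of acyclic complexes), and it is here that the cluster-tilting assumption enters most essentially through the existence of the exchange sequences used to produce preimages of the simples.
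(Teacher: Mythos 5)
Your plan shares the overall silting-reduction spirit of the paper but hits a genuine obstruction at a key step: the claim that $\thick_\emp(\bar T)=\emp$ for $\emp=\ch^b(\ce)/\ch^b(\cp)$ does not follow from the $d$-cluster-tilting hypothesis and is in fact false in general. The $d$-CT resolutions you invoke (conflation chains $0\to M_{d-1}\to\cdots\to M_0\to X\to 0$ with $M_i\in\add(T\oplus P)$) give $\ce$-acyclic complexes, and $\ce$-acyclic complexes do \emph{not} become contractible in $\ch^b(\ce)/\ch^b(\cp)$; they only vanish after the further Verdier quotient down to $\cd^b(\ce)/\ch^b(\cp)\simeq\cc$. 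In other words, they are exactly the objects that you want to become the kernel $\ck$, not something you can contract. Hence a general $X\in\ce$ is \emph{not} in $\thick_\emp(\bar T)$, and Keller's dg Morita theorem only gives $\per(B)\simeq\thick_\emp(\bar T)\subsetneq\emp$. The paper's proof avoids this entirely by taking $\cm$ to be the preimage of $\add_\cc(T)$ in $\ce$ and working from the start with $\ch^b(\cm)/\ch^b(\cp)$, in which $\bar T$ is a silting object -- so it classically generates by definition and $\per(B)\simeq\ch^b(\cm)/\ch^b(\cp)$ is immediate. With that choice, Palu's results~\cite[Prop.~3, Lemma~7]{Palu09} supply the short exact sequence $0\to\ch^b_{\ce\text{-}ac}(\cm)\to\ch^b(\cm)/\ch^b(\cp)\to\cc\to 0$ and identify the kernel with $\per_{\underline{\cm}}(\cm)$; this is what gets matched to $\cd_{fd}(B)$, via finite-dimensionality of total $\Ext$'s and matching simples, rather than via ``exchange sequences.'' (Exchange triangles are the degree-$2$ mutation data; what you actually need are the $\ce$-acyclic $\cm$-resolutions, which is a different construction.)

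Two further (smaller) gaps: the identification $\cc\simeq\emp/\ck$ you use implicitly invokes $\underline{\ce}\simeq\cd^b(\ce)/\ch^b(\cp)$, which needs a hypothesis/justification that the paper again sidesteps by citing Palu directly; and for part~(d), neither of your two proposals clearly lands on the stated $(d+1)$-Calabi--Yau duality between $\mod H^0(B)$ and $\cd_{fd}(B)$ -- transporting the $d$-CY Serre duality along $\per(B)/\cd_{fd}(B)\simeq\cc$ does not by itself shift the degree by one, and Lemma~\ref{l:duality}/Theorem~\ref{t:augmentation-of-non-pos-dg-alg} are Koszul-duality statements with no built-in CY input. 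The paper obtains~(d) in one line from~\cite[Prop.~5.4]{KellerReiten07}, which is the relative $(d+1)$-CY property for $\per_{\underline{\cm}}(\cm)$ inside $\per(\cm)$; you would need to locate or reprove that result rather than derive it from the $d$-CY property of~$\cc$.
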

\begin{remark} 
\begin{itemize}
\item[(a)] 
If $\cp$ is skeletally small, we could prove (a), (b) and (c) by establishing several-object versions of some results in \cite{KalckYang16,KalckYang18a}. Here we use \cite{Palu09}. It is claimed in \cite{Tabuada07} that $\cd_{fd}(B)$ (equivalent to $H^0(\cb)$ there) is $(d+1)$-Calabi--Yau. This turns out to be a misunderstanding of \cite[Proposition 5.4]{KellerReiten07}.
\item[(b)] The dg algebra $B$ in Theorem~\ref{thm:cluster-resolution} is the dg quotient of $\cm$ by $\cp$. If $\cm$ has an additive generator $M$ and $\cp$ has an additive generator $P$, then there is a canonical triangle equivalence $\ch^b(\cm)/\ch^b(\cp)\to \ch^b(\proj A)/\thick(eA)$, where $A=\End(M)$, and $e=\id_P$. In this case, $B$ can be obtained using the constructions in \cite[Section 7]{KalckYang18a}.
\end{itemize}
\end{remark}

\begin{proof}
Let $\cm$ be the preimage of $\ct=\add_\cc(T)$ in $\ce$. A complex $X$ in $\ch^b(\cm)$ is said to be \emph{$\ce$-acyclic} if there are conflations $Z^i\stackrel{\iota^i}{\to} X^i\stackrel{\pi^i}{\to} Z^{i+1}$ such that $d_X^i=\iota^{i+1}\circ\pi^i$ for $i\in\mathbb{Z}$. Let $\ch^b_{\ce-ac}(\cm)$ be the full subcategory of $\ch^b(\cm)$ of $\ce$-acyclic complexes. Then $\Hom_{\ch^b(\cm)}(P,X)=0$ for $P\in\cp$ and $X\in\ch^b_{\ce-ac}(\cm)$. So $\ch^b(\cp)$ is left orthogonal to $\ch^b_{\ce-ac}(\cm)$, and we can view $\ch^b_{\ce-ac}(\cm)$ as a full subcategory of the triangle quotient $\ch^b(\cm)/\ch^b(\cp)$.

By \cite[Proposition 3]{Palu09}, there is a short exact sequence of triangulated categories
\[
\xymatrix{
0\ar[r] & \ch^b_{\ce-ac}(\cm)\ar[r] & \ch^b(\cm)/\ch^b(\cp)\ar[r] & \cc \ar[r] & 0,
}
\]
which induces a triangle equivalence $(\ch^b(\cm)/\ch^b(\cp))/\ch^b_{\ce-ac}(\cm)\to \cc$. Let $\per(\cm)$ be the full subcategory of the derived category of modules over $\cm$ classically generated by all representable functors and let $\per_{\underline{\cm}}(\cm)$ be its full subcategory consisting of complexes whose cohomologies are in $\mod \underline{\cm}$ and $\per(\cp)$ be its thick subcategory generated by $P^{\wedge},~P\in\cp$. By \cite[Lemma 7]{Palu09}, the triangle equivalence $\ch^b(\cm)\to \per(\cm)$ induces a triangle equivalence $\ch^b_{\ce-ac}(\cm)\to \per_{\underline{\cm}}(\cm)$. So there is a triangle equivalence $(\per(\cm)/\per(\cp))/\per_{\underline{\cm}}(\cm)\to \cc$.

By Proposition~\ref{prop:silting-reduction}, the representable functors form a silting subcategory of $\per(\cm)/\per(\cp)$ and is equivalent to $\underline{\cm}$. Let $\tilde{T}$ be a basic additive generator of this silting subcategory corresponding to $T$. Since $\per(\cm)/\per(\cp)$ is an algebraic triangulated category, it follows that there is a dg $k$-algebra $B$ together with a triangle equivalence $F\colon \per(\cm)/\per(\cp)\to \per(B)$ taking $\tilde{T}$ to $B$. As a consequence it follows by Lemma~\ref{lem:silting-reduction} that $H^n(B)=0$ for $n>0$, $H^0(B)\cong \End_\cc(T)$, and $H^n(B)\cong \Hom_\cc(T,\Sigma^{n}T)$ for $n<0$. This proves (a).

Next we show that the equivalence $F\colon \per(\cm)/\per(\cp)\to \per(B)$ restricts to an equivalence $\per_{\underline{\cm}}(\cm)\to \cd_{fd}(B)$. Then (b) and (c) follows.

For $M\in\cm$ and $X\in\per_{\underline{\cm}}(\cm)$, the space $\bigoplus_{n\in\mathbb{Z}}\Hom_{\per(\cm)/\per(\cp)}(M^\wedge,\Sigma^n X)$, being isomorphic to $\bigoplus_{n\in\mathbb{Z}}\Hom_{\per(\cm)}(M^\wedge,\Sigma^n X)$, is finite-dimensional. It follows that the space $\bigoplus_{n\in\mathbb{Z}}\Hom_{\per(B)}(B,\Sigma^n F(X))$ is finite-dimensional, implying that $F(X)\in\cd_{fd}(B)$. To show that the restriction is dense, take a simple module $S$ over $\underline{\cm}$. Then $\Hom_{\per(\cm)}(\tilde{T},\Sigma^n S)\cong \Hom_{\per(\cm)/\per(\cp)}(\tilde{T},\Sigma^n S)$ is $1$-dimensional for $n=0$ and is trivial for $n\neq 0$. Therefore $\Hom_{\per(B)}(B,\Sigma^n F(S))$ is $1$-dimensional for $n=0$ and is trivial for $n\neq 0$. So $F(S)$ is a simple module over $H^0(B)$. But $\underline{\cm}$ is equivalent to $\proj H^0(B)$, implying that $F$ takes a complete set of pairwise non-isomorphic simple modules over $\underline{\cm}$ to a complete set of pairwise non-isomorphic simple modules over $H^0(B)$. Now $\cd_{fd}(B)$ is classically generated by simple $H^0(B)$-modules. It follows that $F$ restricts to an equivalence $\per_{\underline{\cm}}(\cm)\to\cd_{fd}(B)$.

Finally it follows by \cite[Proposition 5.4]{KellerReiten07} that there is a bifunctorial isomorphism for $X\in\mod\underline{\cm}$ and $Y\in\per_{\underline{\cm}}(\cm)$
\[
D\Hom(X,Y)\cong \Hom(Y,\Sigma^{d+1}X).
\]
We obtain (d) by applying the equivalence $F$.
\end{proof}

\section{Calabi--Yau categories with
cluster-tilting objects}
Let $k$ be a field. In this section we will apply our previous results
to study the canonical form of Calabi--Yau triangulated categories with cluster-tilting objects. 

\subsection{Ginzburg dg algebras}\label{ss:cy-alg}

\newcommand{\cten}{{\widehat{\ten}}}

Let $A$ be a pseudo-compact dg $k$-algebra, see~\cite{KellerYang11,VandenBergh15}. Let $A^e=A^{op}\cten A$
be the enveloping algebra, \ie the completion of $A^{op}\ten A$ with respect to the topology induced from $A$.
The dg algebra $A$ is \emph{topologically homologically smooth} if $A\in\per(A^e)$,
and is \emph{bimodule $d$-Calabi--Yau} if in addition there is an isomorphism $\eta\colon\RHom_{A^e}(A,A^e)\stackrel{\cong}{\longrightarrow}\Sigma^d A$ in $\cd(A^e)$. In the original definition of Ginzburg, $\eta$ was assumed to be self-dual, but this turns out to be automatic, see \cite[Appendix 14]{VandenBergh15}.

\begin{lemma}\label{l:bimodule-cy->triangulated-cy}
 Let $A$ be a bimodule $d$-Calabi--Yau pseudo-compact dg algebra. Then $\cd_{fd}(A)$ is $d$-Calabi--Yau as a triangulated category.
\end{lemma}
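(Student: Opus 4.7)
The plan is to derive the $d$-CY property of $\cd_{fd}(A)$ by transporting the bimodule CY isomorphism $\eta$ through a standard rewrite of $\RHom_A$ as Hochschild cochains out of the diagonal bimodule $A$. Morally, the ``inverse dualising complex'' $\RHom_{A^e}(A,A^e)$ controls the Serre functor of $\cd_{fd}(A)$, and $\eta$ says this Serre functor is exactly a $d$-fold shift of the identity, so after contracting we should be left with pure $k$-linear duality, which reproduces itself on $\cd_{fd}(A)$ via $DDM\cong M$.

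Concretely, for $M,N\in\cd_{fd}(A)$ I would begin with the pseudo-compact version of the classical identity
\[
\RHom_A(M,N)\;\cong\;\RHom_{A^e}(A,\RHom_k(M,N)),
\]
which comes from recognising $A$-linear maps as the $A^e$-centre of $\RHom_k(M,N)$. Since $M\in\cd_{fd}(A)$ one may identify $\RHom_k(M,N)\simeq DM\lten_k N$ as $A$-bimodules. Topological homological smoothness gives the perfect-module identity
\[
\RHom_{A^e}(A,X)\;\cong\;\RHom_{A^e}(A,A^e)\lten_{A^e}X,
\]
into which I substitute $\eta$, reducing the computation to
\[
\RHom_A(M,N)\;\cong\;\Sigma^{d}\bigl(A\lten_{A^e}(DM\lten_k N)\bigr)\;\cong\;\Sigma^{d}(DM\lten_A N),
\]
where the last step is the standard contraction $A\lten_{A^e}(X\lten_k Y)\cong X\lten_A Y$. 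Finally, applying $D=\Hom_k(-,k)$ and the tensor--Hom adjunction, together with $DDM\cong M$ for $M\in\cd_{fd}(A)$, produces a bifunctorial isomorphism
\[
D\RHom_A(M,N)\;\cong\;\RHom_A(N,\Sigma^{d}M),
\]
from which the $d$-CY property follows on taking $H^0$. Hom-finiteness and the Krull--Schmidt property required by the definition of $d$-CY triangulated category fall out of the same formula, since $DM\lten_A N$ has finite-dimensional total cohomology whenever $M,N\in\cd_{fd}(A)$.

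The main obstacle I foresee is carrying out every step rigorously in the pseudo-compact derived category: here $A^e=A^{op}\widehat{\otimes}A$ is a completed tensor product and all of $\RHom$, $\lten_A$, $\lten_{A^e}$ must be interpreted in that setting. What must be cited or verified (in the framework of \cite{VandenBergh15,KellerYang11}) are the four identities used above --- the Hochschild-cohomology rewrite $\RHom_A(M,N)\cong\RHom_{A^e}(A,\RHom_k(M,N))$, the perfect-module formula $\RHom_{A^e}(A,X)\cong\RHom_{A^e}(A,A^e)\lten_{A^e}X$, the contraction $A\lten_{A^e}(X\lten_k Y)\cong X\lten_A Y$, and the $k$-duality $D(X\lten_A N)\cong \RHom_A(N,DX)$. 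Each is formal given the right framework but not automatic without it. A smaller bookkeeping point is fixing the orientation of $\eta$ so that the final shift in the Serre duality comes out as $\Sigma^{d}$ rather than $\Sigma^{-d}$.
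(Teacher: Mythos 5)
Your argument is essentially the standard derivation of Serre duality on $\cd_{fd}(A)$ from bimodule Calabi--Yau, and this is exactly what the paper relies on: its entire proof is the one-line citation to \cite[Lemma A.16]{KellerYang11}, whose proof proceeds along the same lines you lay out (Hochschild rewrite $\RHom_A(M,N)\cong\RHom_{A^e}(A,\RHom_k(M,N))$, perfectness of $A$ over $A^e$, substitution of $\eta$, contraction, $k$-duality). So you have in effect reproved the cited lemma rather than cited it; both routes buy the same thing, and the chain of four formal identities you single out as ``to be cited or verified'' is precisely what the reference carries out in the pseudo-compact setting.

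One substantive point you correctly flag as ``bookkeeping'' actually touches a discrepancy in the paper itself. The paper's definition reads $\eta\colon\RHom_{A^e}(A,A^e)\iso\Sigma^d A$, but the convention used by \cite{KellerYang11,VandenBergh15} (and needed for your contraction to land on $D\RHom_A(M,N)\cong\RHom_A(N,\Sigma^{d}M)$ rather than $\Sigma^{-d}M$) is $\RHom_{A^e}(A,A^e)\cong A[-d]=\Sigma^{-d}A$. With the paper's displayed sign your computation gives the $(-d)$-CY property, so the sign in the paper's definition appears to be a typo; when you carry out the calculation you should use $\Sigma^{-d}A$, after which everything lines up. A second small quibble: Hom-finiteness of $\cd_{fd}(A)$ follows already from topological homological smoothness (via the Hochschild rewrite plus $A\in\per(A^e)$), not from the CY isomorphism; your phrasing slightly conflates the two, but the conclusion is of course correct.
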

\begin{proof}
This immediately follows from~\cite[Lemma A.16]{KellerYang11}.
\end{proof}

Nice examples of bimodule Calabi--Yau dg algebras include complete Ginzburg dg algebras of quivers with potential. Let $Q$ be a finite quiver and $W$ be a formal combination of cycles of $Q$. The pair $(Q,W)$ is called a \emph{quiver with
potential}. From $Q$ we define a graded quiver $\tilde{Q}$, which has the same vertices as $Q$ and
whose arrows are
\begin{itemize}
\item[--] the arrows of $Q$ (they all have degree~$0$),
\item[--] an arrow $a^*: j \to i$ of degree $-1$ for each arrow $a:i\to j$ of $Q$,
\item[--] a loop $t_i : i \to i$ of degree $-2$ for each vertex $i$
of $Q$.
\end{itemize}
The \emph{complete Ginzburg dg algebra} $\widehat{\Gamma}(Q,W)$, introduced by Ginzburg in~\cite{Ginzburg06}, is the dg algebra
whose underlying graded algebra is the complete path algebra $\widehat{k\tilde{Q}}$ and whose differential is
the unique continuous linear differential which satisfies the graded Leibniz rule and which takes the following value
on the arrows of $\tilde{Q}$:
\begin{itemize}
\item[--] $d(a)=0$ for each arrow $a$ of $Q$,
\item[--] $d(a^*) = \del_a W$ for each arrow $a$ of $Q$,
\item[--] $d(t_i) = e_i (\sum_{a} (aa^*-a^*a)) e_i$ for each vertex $i$ of $Q$, where
$e_i$ is the trivial path at $i$ and the sum runs over the set of
arrows of $Q$.
\end{itemize}
Here for an arrow $a$ of $Q$, the \emph{cyclic derivative} $\del_a$
is the unique continuous linear map which takes a cycle $c$ to the sum
$ \sum_{c=u a v} vu $ taken over all decompositions of the cycle $c$
(where $u$ and $v$ are possibly trivial paths).

The \emph{complete Jacobian algebra} $\widehat{J}(Q,W)$ is the $0$-th cohomology of $\widehat{\Gamma}(Q,W)$. More precisely,
\[\widehat{J}(Q,W)=\widehat{kQ}/\overline{(\del_a W\mid a\in Q_1)}.\]

\begin{theorem}
\emph{(\cite[Theorem A.17]{KellerYang11},~\cite[Theorem 6.3]{Keller11})}  The dg algebra $\widehat{\Gamma}(Q,W)$ is topologically
homologically smooth and bimodule $3$-Calabi--Yau.
\end{theorem}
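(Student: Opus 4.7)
The plan is to construct an explicit small bimodule resolution of $\Gamma := \widehat{\Gamma}(Q,W)$ (the Ginzburg complex) that is both visibly perfect and manifestly self-dual up to a shift by $3$. Writing $\Gamma^e = \Gamma^{op}\cten\Gamma$ and $K = \prod_{i\in Q_0} ke_i$, the resolution takes the form of a four-term complex of free $\Gamma^e$-modules
\[
P_\bullet \colon \bigoplus_{i \in Q_0} \Gamma e_i \cten e_i \Gamma \xrightarrow{d_3} \bigoplus_{a \in Q_1} \Gamma e_{t(a)} \cten e_{s(a)} \Gamma \xrightarrow{d_2} \bigoplus_{a \in Q_1} \Gamma e_{s(a)} \cten e_{t(a)} \Gamma \xrightarrow{d_1} \bigoplus_{i \in Q_0} \Gamma e_i \cten e_i \Gamma,
\]
placed in cohomological degrees $-3,-2,-1,0$. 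The total differential combines the internal differential of $\Gamma$ on each tensor factor with the horizontal maps $d_1(e_{s(a)}\ten e_{t(a)}) = a \ten e_{t(a)} - e_{s(a)} \ten a$, $d_3(e_i \ten e_i) = e_i\bigl(\sum_a (a \ten a^* - a^* \ten a)\bigr)e_i$, and a middle map $d_2$ built from the second partial cyclic derivatives $\partial_a\partial_b W$ of the potential; the augmentation to $\Gamma$ is the multiplication map on the rightmost term.

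To prove that $P_\bullet \to \Gamma$ is a quasi-isomorphism, I would filter $\Gamma$ by arrow length and pass to the associated graded, where $W$ becomes a sum of homogeneous monomials and the corresponding Koszul-type complex is classical (essentially the deformed Calabi--Yau setup of Ginzburg's original paper). The pseudo-compact topology being handled as in \cite{KellerYang11,VandenBergh15}, a standard completeness argument then transports acyclicity back to $\Gamma$ itself, while boundedness together with finite generation of each term shows $\Gamma \in \per(\Gamma^e)$, which is topological homological smoothness. For the $3$-Calabi--Yau property, the structure of $\tilde{Q}$ is already symmetric: the pairings $e_i \leftrightarrow t_i$ and $a \leftrightarrow a^*$ extend canonically to an isomorphism $\RHom_{\Gamma^e}(P_\bullet, \Gamma^e) \cong \Sigma^3 P_\bullet$ (with reversed arrows), and composing with the augmentation yields the desired isomorphism $\RHom_{\Gamma^e}(\Gamma, \Gamma^e) \cong \Sigma^3 \Gamma$; its self-duality is then automatic by the appendix of \cite{VandenBergh15} cited just above the theorem.

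The hardest step will be the verification that $P_\bullet$ is genuinely a resolution. This requires checking $d^2 = 0$, which is a sign-heavy computation that forces one to use $dW = 0$ (since $W$ is a sum of cycles in $\widehat{kQ}$) together with the compatibility of cyclic derivatives with the Leibniz rule, and then showing that the reduction to the associated graded is valid in the pseudo-compact setting, so that the filtration respects both the bimodule structure and the completion. By contrast, perfection and the Calabi--Yau pairing are essentially formal consequences of the shape of $P_\bullet$.
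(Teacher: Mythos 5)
The paper does not prove this statement: it quotes it from \cite[Theorem A.17]{KellerYang11} (whose relevant appendix is due to Van den Bergh) and from Keller's deformed Calabi--Yau completions \cite[Theorem 6.3]{Keller11}. Your proposal follows the first of those two routes, via an explicit bimodule resolution whose perfection gives smoothness and whose self-duality under $e_i \leftrightarrow t_i$, $a \leftrightarrow a^*$ gives the $3$-Calabi--Yau structure; that is indeed the strategy of Van den Bergh's appendix. Keller's argument in \cite{Keller11} is instead structural, realising $\widehat{\Gamma}(Q,W)$ as a deformed $3$-Calabi--Yau completion of $kQ$, which is Calabi--Yau by construction.

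There is, however, a genuine gap in the complex $P_\bullet$ as you have written it. The correct $\mathcal H$-projective bimodule resolution of a (pseudo-compact, topologically) free dg algebra $\Gamma=(\widehat T_K V,d)$ is the cone of $\Omega^1_{\Gamma}\to\Gamma\mathbin{\widehat\ten}_K\Gamma$, where $\Omega^1_\Gamma\cong\Gamma\mathbin{\widehat\ten}_K V\mathbin{\widehat\ten}_K\Gamma$ carries the differential given by the universal noncommutative derivative of $d_\Gamma$. With $V=k\tilde Q_1=kQ_1\oplus kQ_1^*\oplus kQ_0$ this produces your four columns, but the differential out of the degree-$(-3)$ column (the one generated by the loops $t_i$) is the universal derivative of $d(t_i)=e_i(\sum_a(aa^*-a^*a))e_i$, namely
\[
\sum_a\bigl(e_i\mathbin{\widehat\ten}a\mathbin{\widehat\ten}a^*e_i+e_ia\mathbin{\widehat\ten}a^*\mathbin{\widehat\ten}e_i-e_i\mathbin{\widehat\ten}a^*\mathbin{\widehat\ten}ae_i-e_ia^*\mathbin{\widehat\ten}a\mathbin{\widehat\ten}e_i\bigr),
\]
which has components landing in \emph{both} the column indexed by $Q_1^*$ and the column indexed by $Q_1$. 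So there is necessarily a diagonal differential $P_{-3}\to P_{-1}$ in addition to the horizontal one $P_{-3}\to P_{-2}$, and without it the complex fails $d^2=0$ and is not a resolution of $\Gamma$. (Notice that your own formula for $d_3$ already exhibits both $a$ and $a^*$ in the middle slot; it is the declared codomain of $d_3$ that is too small.) Similarly, $d_2$ should be the universal derivative of $\partial_aW$ rather than literally the matrix of second cyclic derivatives $\partial_a\partial_bW$, though these are closely related. Once the resolution is stated correctly, no separate filtration-by-arrow-length argument is needed for acyclicity: the sequence $0\to\Omega^1_\Gamma\to\Gamma\mathbin{\widehat\ten}_K\Gamma\to\Gamma\to 0$ is exact for any topologically free pseudo-compact algebra over $K$, independently of the differential, and the cone picture encodes this directly; the filtration is useful instead for controlling the completion, as in \cite{KellerYang11,VandenBergh15}.
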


\subsection{Cluster categories}\label{ss:generalised-cluster-cat}

Let $A$ be a pseudo-compact dg $k$-algebra such that
\begin{itemize}
 \item[--] $A$ is non-positive,
 \item[--] $A$ is topologically homologically smooth,
 \item[--] $A$ is bimodule $3$-Calabi--Yau,
 \item[--] $H^0(A)$ is finite-dimensional.
\end{itemize}
Then $\cd_{fd}(A)\subseteq\per(A)$. Set
\[\cc_{A}:=\per(A)/\cd_{fd}(A),\]
and call it the \emph{cluster category} of $A$.

\begin{theorem}\label{t:generalized-cluster-cat}
\label{thm:Amiot}
 \emph{(\cite[Theorem 2.1]{Amiot09},~\cite[Theorem A.21]{KellerYang11},~\cite[Theorem 2.2]{Guolingyan11a},~\cite[Theorem 5.8]{IyamaYang18})}
The category $\cc_A$ is $2$-Calabi--Yau, and the image of $A$ under the canonical projection
$\per(A)\rightarrow\cc_A$ is a cluster-tilting object whose endomorphism algebra
is $H^0(A)$.
\end{theorem}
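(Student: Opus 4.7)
The plan is to follow the standard Amiot argument, establishing in turn the Hom-finiteness of $\cc_A$, the $2$-Calabi--Yau property, and the cluster-tilting property of $T:=\pi(A)$, where $\pi\colon \per(A)\to\cc_A$ is the canonical projection. All three rest on a careful use of the standard $t$-structure on $\per(A)$ coming from the non-positivity of $A$, whose heart is naturally equivalent to $\mod H^0(A)$.

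First, I would set up a fundamental domain $\cf\subseteq\per(A)$ consisting of those $X$ with $H^p(X)=0$ for $p\notin\{-1,0\}$. Using the truncation triangles $\tau^{\leq n-1}X\to \tau^{\leq n}X\to H^n(X)[-n]$ iteratively, together with the observation that truncated pieces eventually lie in $\cd_{fd}(A)$ (finite-dimensionality in each degree follows from $\dim_k H^0(A)<\infty$, topological smoothness, and bimodule Calabi--Yau duality), I would show every object of $\per(A)$ is isomorphic in $\cc_A$ to an object of $\cf$. For $X,Y\in\cf$, the natural map $\Hom_{\per(A)}(X,Y)\to\Hom_{\cc_A}(X,Y)$ is surjective with finite-dimensional source, yielding Hom-finiteness of $\cc_A$. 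This is the step I expect to be the main obstacle, since one must control convergence of the truncation filtration in the pseudo-compact setting and identify which morphisms become zero after Verdier-quotienting by $\cd_{fd}(A)$.

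Second, the $2$-Calabi--Yau property follows from Lemma~\ref{l:bimodule-cy->triangulated-cy}, which gives the $3$-Calabi--Yau duality $D\Hom_{\cd(A)}(M,N)\cong\Hom_{\cd(A)}(N,\Sigma^3 M)$ for $M,N\in\cd_{fd}(A)$, together with its hybrid version for $X\in\per(A)$ and $Y\in\cd_{fd}(A)$. For $X,Y\in\cc_A$ represented in $\cf$, I would combine the description of $\cc_A$-morphisms modulo $\cd_{fd}(A)$ from the first step with the $3$-Calabi--Yau duality applied to the $\cd_{fd}(A)$-pieces appearing in the truncation triangles of $X$ and $Y$. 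The shift from Calabi--Yau dimension $3$ to $2$ reflects the fact that Verdier-quotienting by the $3$-Calabi--Yau subcategory $\cd_{fd}(A)$ identifies opposite extension classes, effectively lowering the Calabi--Yau dimension by one.

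Third, for the cluster-tilting property: non-positivity of $A$ gives $\Hom_{\per(A)}(A,\Sigma^n A)=H^n(A)=0$ for $n>0$, and the fundamental-domain description of Hom-spaces then yields $\End_{\cc_A}(T)=H^0(A)$ and $\Hom_{\cc_A}(T,\Sigma T)=0$. Conversely, if $X\in\cc_A$ satisfies $\Hom_{\cc_A}(T,\Sigma X)=0$, I would choose a representative $X\in\cf$, write it as an extension $H^{-1}(X)[1]\to X\to H^0(X)$ in the heart, and use the Ext-vanishing together with the $2$-Calabi--Yau duality established in the previous step to force $X\in\add(T)$. Once Step one is in hand, Steps two and three are relatively formal consequences of the $3$-Calabi--Yau structure on $\cd_{fd}(A)$ and the standard $t$-structure on $\per(A)$.
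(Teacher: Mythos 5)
The paper does not prove this theorem; it is stated as a cited result (Amiot, Keller--Yang, Guo, Iyama--Yang), so there is no ``paper's own proof'' to compare against. Your sketch correctly identifies the architecture of the standard proof (fundamental domain, Hom-finiteness, 2--CY duality via the 3--CY structure on $\cd_{fd}(A)$, cluster-tilting property of $\pi(A)$), but the central technical step is set up incorrectly.

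Your fundamental domain $\cf = \{X\in\per(A)\,:\,H^p(X)=0 \text{ for } p\notin\{-1,0\}\}$ is not the right object, and the claim that every $X\in\per(A)$ is isomorphic in $\cc_A$ to an object of this $\cf$ is false. Take $Q$ to be a single vertex with no arrows and $W=0$, so $A=\widehat\Gamma(Q,W)=k[[t]]$ with $\deg t = -2$ and $d=0$. Then $H^p(A)$ is nonzero for every even $p\leq 0$, and $H^{-1}(M)=0$ for every dg $A$-module $M$; hence your $\cf$ consists exactly of complexes with cohomology concentrated in degree $0$, which all lie in $\cd_{fd}(A)$ and hence die in $\cc_A$. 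But the Theorem implies $\cc_A\neq 0$ (its endomorphism algebra of $T$ is $k$), so $\cf$ cannot map onto $\cc_A$. The resolution is that the fundamental domain in Amiot's and Keller--Yang's arguments is not defined by a cohomological support condition but as $\cf = \cd^{\leq 0}\cap {}^{\perp}\bigl(\cd^{\leq -2}\bigr)\cap\per(A)$, i.e., one requires the vanishing $\Hom(X,\Sigma^n M)=0$ for $n\geq 2$ and all $H^0(A)$-modules $M$ rather than a bound on where $H^\ast(X)$ lives; this domain does contain $A$. The subsequent steps (Hom-finiteness, the $3\to 2$ shift in CY dimension, the cluster-tilting check via decomposing $X\in\cf$ by a triangle over the heart) all rely on that definition, so as written your outline does not close.
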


For example, for a quiver with potential $(Q,W)$ such that the complete Jacobian algebra $\widehat{J}(Q,W)$ is finite-dimensional,
the cluster category
\[\cc_{(Q,W)}:=\cc_{\widehat{\Gamma}(Q,W)}\]
is $2$-Calabi--Yau, and the image of $\widehat{\Gamma}(Q,W)$ in $\cc_{(Q,W)}$ is a (2-)cluster-tilting object
whose endomorphism algebra is $\widehat{J}(Q,W)$.

We will call the image of $A$ in $\cc_A$ the \emph{standard cluster-tilting object}.

\subsection{Calabi--Yau categories with cluster-tilting objects}

We propose the following conjecture (\confer \cite[Summary of results, Part 2, Perspectives]{Amiot08})..

\begin{conjecture}\label{conj:cy-cat-is-generalised-cluster-cat}
Let $k$ be algebraically closed of characteristic zero. Let $\cc$ be a $2$-Calabi--Yau algebraic triangulated $k$-category with a cluster-tilting object $T$.
Then $\cc$ is triangle equivalent to the
cluster category of some quiver with potential,
with the equivalence sending $T$ to the standard cluster-tilting object.
\end{conjecture}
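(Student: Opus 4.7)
The plan is to reduce the conjecture to the question of whether the dg algebra produced by Theorem \ref{thm:cluster-resolution} can be realised as a complete Ginzburg dg algebra, and to attack the latter via Van den Bergh's criterion on bimodule $3$-Calabi--Yau dg algebras together with Koszul duality.

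First I would apply Theorem \ref{thm:cluster-resolution} with $d=2$ to obtain a non-positive dg $k$-algebra $B$ with $H^0(B)\cong \End_\cc(T)$ and a triangle equivalence $\per(B)/\cd_{fd}(B)\to \cc$ sending $B$ to $T$. Comparing with Theorem \ref{t:generalized-cluster-cat}, it then suffices to produce a pseudo-compact dg algebra $\Gamma$ which is non-positive, topologically homologically smooth, bimodule $3$-Calabi--Yau, with $H^0(\Gamma)$ finite-dimensional, together with a quasi-equivalence $B\simeq \Gamma$; by Van den Bergh's existence result for superpotentials, any such $\Gamma$ is quasi-equivalent to $\widehat{\Gamma}(Q,W)$ for some quiver with potential $(Q,W)$, and the induced triangle equivalence $\cc_\Gamma=\per(\Gamma)/\cd_{fd}(\Gamma)\to\cc$ sends $B$, and hence $T$, to the standard cluster-tilting object.

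Next I would pass to Koszul duality. Since $k$ is algebraically closed and $H^0(B)\cong\End_\cc(T)$ is a basic finite-dimensional algebra (after replacing $T$ by a basic version), Theorem \ref{t:augmentation-of-non-pos-dg-alg} identifies $B$, up to quasi-equivalence, with the dual bar construction $E(A^*)$ of its minimal $A_\infty$-Koszul dual $A^*=\bigoplus_{p\in\Z}\Hom_{\cd(B)}(S,\Sigma^p S)$, where $S$ is the sum of the simple $H^0(B)$-modules. Part (d) of Theorem \ref{thm:cluster-resolution}, combined with the fact that $\cd_{fd}(B)$ is classically generated by $S$, should translate via dualisation into a cyclic non-degenerate pairing of degree $d-1=1$ on the reduced part of $A^*$ that is compatible with every higher product $m_n$. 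From this cyclic structure I would read off a superpotential $W$ on the graded quiver $Q$ underlying $A^*$ and verify that the complete Ginzburg dg algebra $\widehat{\Gamma}(Q,W)$ itself arises as the dual bar construction of a cyclic $A_\infty$-algebra with the same graded pieces and products as $A^*$, yielding $B\simeq E(A^*)\simeq \widehat{\Gamma}(Q,W)$.

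The main obstacle is the promotion of the partial duality in Theorem \ref{thm:cluster-resolution}(d) to a genuine bimodule $3$-Calabi--Yau, equivalently exact or cyclic, structure on $A^*$. The formula in (d) only pairs $\mod H^0(B)$ against $\cd_{fd}(B)$ and is not manifestly symmetric on the whole of $\cd_{fd}(B)$, nor does it visibly impose the full cyclic symmetry demanded by the Van den Bergh--Kontsevich--Soibelman formalism on all the $m_n$. Handling this seems to require either an enhanced input (such as a cyclic refinement of the Serre functor on $\cc$) or a deformation-theoretic argument showing that any minimal $A_\infty$-structure on $\bigoplus_p \Hom_\cc(T,\Sigma^p T)$ compatible with the duality in (d) is gauge-equivalent to a cyclic one. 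The rigidity of trivial extensions captured by Lemma \ref{lem:no-deformation-trivial-extension}, where scalar twistings along arrows are shown to be trivial in the hereditary model, is evidence that such a cyclification exists; making this precise in the general, non-hereditary $d=2$ setting is where the genuinely new work would lie.
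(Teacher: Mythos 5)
This statement is a \emph{conjecture}, and the paper does not prove it: in Section 4 the authors only reduce it to Question~\ref{Q:CYpropDG} by observing that the dg algebra $B$ of Theorem~\ref{thm:cluster-resolution} is non-positive and topologically homologically smooth, that $\cc\simeq\per(B)/\cd_{fd}(B)$, and that \emph{if} $B$ were quasi-equivalent to an exact $3$-bimodule (equivalently, exact $3$-Calabi--Yau) dg algebra, then Van den Bergh's Theorem~B would show $B\simeq\widehat{\Gamma}(Q,W)$ and therefore $\cc\simeq\cc_{(Q,W)}$. The missing step -- upgrading the ``one-sided'' Serre-type duality in Theorem~\ref{thm:cluster-resolution}(d) to an exact bimodule CY structure -- is precisely what Question~\ref{Q:CYpropDG} asks, and it is left open.

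Your proposal follows the same reduction: apply Theorem~\ref{thm:cluster-resolution} with $d=2$, compare the output against Theorem~\ref{t:generalized-cluster-cat}, invoke Van den Bergh for the passage from exact bimodule $3$-CY to Ginzburg, and land on the same gap. The Koszul-duality layer you add -- rewriting $B$ as $E(A^*)$ via Theorem~\ref{t:augmentation-of-non-pos-dg-alg}, then trying to extract a cyclic pairing on $A^*$ from (d) -- is the same mechanism the authors implicitly rely on (and actually carry out in the much more restrictive tree-quiver case in the proof of Theorem~\ref{thm:KR's-recognition-theorem}). You correctly diagnose the obstruction: the duality in (d) is functorial but only pairs $\mod H^0(B)$ against $\cd_{fd}(B)$; it does not by itself yield compatibility with all higher $A_\infty$-products $m_n$ in the cyclic sense needed by Van den Bergh's exactness criterion. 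Your appeal to Lemma~\ref{lem:no-deformation-trivial-extension} as ``evidence'' that a cyclification always exists is suggestive but is not an argument -- that lemma handles scalar rescalings along arrows in a radical-square-zero model for tree quivers, far from establishing cyclic rigidity of arbitrary minimal $A_\infty$-structures in the $d=2$ case. So your proposal reproduces the paper's reduction faithfully and names the open gap honestly, but like the paper it does not contain a proof of the conjecture, and the place where you say ``the genuinely new work would lie'' is precisely the unresolved Question~\ref{Q:CYpropDG}.
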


Let $B$ be as in Theorem~\ref{thm:cluster-resolution}. Then $B$ is non-positive and topologically homologically smooth such that $\cc$ is triangle equivalent to $\per(B)/\cd_{fd}(B)$. If $B$ is quasi-equivalent to an exact $3$-bimodule dg algebra in the sense \cite[Section 1, Definition]{VandenBergh15}, then by \cite[Theorem B]{VandenBergh15}, $B$ is quasi-equivalent to the complete Ginzburg dg algebra of some quiver with potential $(Q,W)$, and therefore $\cc$ is triangle equivalent to the cluster category $\cc_{(Q,W)}$. Therefore we propose the following question. If it has a positive answer, then Conjecture~\ref{conj:cy-cat-is-generalised-cluster-cat} holds true. Note that  (ii) implies
(i) by Lemma~\ref{l:bimodule-cy->triangulated-cy}.

\begin{question} \label{Q:CYpropDG}
Let $A$ be a non-positive pseudo-compact topologically homologically smooth dg $k$-algebra
and let $K$ be the direct product of a finite copies of $k$.
Assume that
\begin{itemize}
\item[(a)] there is an injective homomorphism $\eta:K\rightarrow A$ and a surjective
homomorphism $\varepsilon:A\rightarrow K$ of dg algebras such that $\varepsilon\circ\eta=id_K$,
\item[(b)] $\cd_{fd}(A)=\thick(K)$, where $K$ is viewed as a dg $A$-module via $\varepsilon$,
\item[(c)] $H^0(A)$ is finite-dimensional over $k$.
\end{itemize}
Are the following conditions equivalent?
\begin{itemize}
 \item[(i)] \footnote{We point out that this condition is weaker than the one claimed in \cite[Section 7.2]{KalckYang16}.} there is a bifunctorial isomorphism for $X\in\mod H^0(A)$ and $Y\in\cd_{fd}(A)$
\[
D\Hom_{\cd_{fd}(A)}(X,Y)\cong \Hom_{\cd_{fd}(A)}(Y,\Sigma^3X),
\]
 \item[(ii)] $A$ is quasi-equivalent to an exact $3$-Calabi--Yau dg algebra.
\end{itemize}
\end{question}


In \cite{Amiot11}, Amiot asked the following question. If Conjecture~\ref{conj:cy-cat-is-generalised-cluster-cat} holds true, then this question has a positive answer.

\begin{question}\label{q:amiot-question}\emph{(\cite[Question 2.20.1]{Amiot11})}
Let $k$ be algebraically closed of characteristic zero. 
Let $\cc$ be a 2-Calabi--Yau algebraic triangulated category with a cluster-tilting object $T$.
Is $\End_{\cc}(T)$ isomorphic to the complete Jacobian algebra of some quiver with potential?
\end{question}

Note that  we have replaced `Jacobian algebra' by `complete Jacobian algebra'. The original question has a negative answer, as there are quivers with potentials whose complete Jacobian algebras are not non-complete Jacobian algebras of any quiver with potential, see \cite[Example 4.3]{Plamondon13} for an example.

Moreover, we have put an extra assumption on the characteristic of the field, 
as when $k$ is of positive characteristic, the answer to the question is negative. For example, if $k$ is of characteristic $p>0$, then $k[x]/(x^{p-1})$ is not a Jacobian algebra. However, take $\Gamma$ as the dg algebra whose underlying graded algebra is $k\langle\hspace{-3pt}\langle x,x^*,t\rangle\hspace{-3pt}\rangle$ with $\deg(x)=0$, $\deg(x^*)=-1$ and
$\deg(t)=-2$, and whose differential $d$
is defined by
\[
d(x)=0,~~~ d(x^*)= x^{p-1},~~~ d(t)= xx^*-x^*x.
\]
It is straightforward to check that $\Gamma$ satisfies the assumptions of Theorem~\ref{thm:Amiot}, so $k[x]/(x^{p-1})=H^0\Gamma$ is the endomorphism of a cluster-tilting object in a 2-Calabi--Yau algebraic triangulated category. More examples can be found in  \cite{Ladkani14}.

\subsection{Keller--Reiten's recognition theorem}

Let $Q$ be an acyclic quiver and $d\geq 2$. Define the $d$-cluster category of $Q$ as the orbit category
\[
\cc^{(d)}_Q:=\cd^b(\mod kQ)/\tau^{-1}\circ \Sigma^{d-1}.
\]
It is a $d$-Calabi--Yau triangulated category (\cite[Section 8]{Keller05} and \cite{Keller05cor}), and the image $T$ of $kQ$ is a $d$-cluster-tilting object with endomorphism algebra $\End_{\cc_Q}(T)=kQ$ (\cite[Proposition 1.7(d) and Theorem 3.3(b)]{BuanMarshReinekeReitenTodorov06},  \cite[Proposition 5.6]{KellerReiten07}). Moreover, $\Hom_{\cc_Q}(T)=0$ for $-d+2\leq p\leq -1$ (\cite[Lemma 4.1]{KellerReiten08}).

\begin{theorem}[{\cite[Theorems 2.1 and 4.2]{KellerReiten08}}]
\label{thm:KR's-recognition-theorem}
Let $\cc$ be a $d$-Calabi--Yau algebraic triangulated category. Assume that $T$ is a $d$-cluster-tilting object of $\cc$ such that $\Hom_\cc(T,\Sigma^p T)=0$ for $-d+2\leq p\leq -1$ and that $\End_\cc(T)\cong kQ$ for some finite acyclic quiver $Q$. Then $\cc$ is  triangle equivalent to $\cc^{(d)}_Q$.
\end{theorem}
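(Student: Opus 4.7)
The plan is to apply Theorem~\ref{thm:cluster-resolution} to obtain a dg algebra $B$ with $\per(B)/\cd_{fd}(B)\simeq\cc$, to identify $B$ with the $(d+1)$-Calabi--Yau Ginzburg-type dg algebra $\Pi$ of $(Q,0)$ (that is, Keller's $(d+1)$-Calabi--Yau completion of $kQ$), and finally to invoke the higher-dimensional generalisation of Amiot's theorem to obtain $\cc\simeq\cc^{(d)}_Q$. Applying Theorem~\ref{thm:cluster-resolution} and then replacing $B$ by $\tau^{\leq 0}B$ if necessary, I obtain a non-positive dg algebra $B$ with $H^0(B)\cong kQ$, $H^p(B)=0$ for $-d+2\leq p\leq -1$, and a triangle equivalence $\per(B)/\cd_{fd}(B)\stackrel{\sim}{\to}\cc$ sending $B$ to $T$. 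Since $H^0(B)=kQ$ is finite-dimensional and elementary and $\cd_{fd}(B)\subseteq\per(B)$, Theorem~\ref{t:augmentation-of-non-pos-dg-alg} yields a quasi-equivalence $B\simeq E(A^*)$, where $A^*=\bigoplus_{p}\Hom_{\cd(B)}(S,\Sigma^p S)$ is the $A_\infty$-Koszul dual and $E(A^*)=(\widehat{k\tilde{Q}},d)$ is a complete dg-quiver algebra.

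Next I would pin down the graded quiver $\tilde{Q}$. Because $kQ$ is hereditary (as $Q$ is a tree) and $H^p(B)=0$ for $-d+2\leq p\leq -1$, Lemma~\ref{lem:dg-quiver-algebra-with-hereditary-0-th-cohomology} shows $\tilde{Q}$ has no arrows in degrees $-1,\dots,-d+2$; its degree-$0$ arrows reproduce $Q$. Part~(d) of Theorem~\ref{thm:cluster-resolution} applied to the simple $H^0(B)$-modules yields a $(d+1)$-Calabi--Yau duality $D A^{*,p}\cong A^{*,d+1-p}$ on $A^*$. Combined with the positivity of $A^*$ (\cite[Theorem A.1(c)]{BruestleYang13}), this forces $A^{*,p}=0$ for $p\geq d+2$ and dualises the remaining pieces, producing one arrow $a^*\colon j\to i$ in degree $1-d$ for each arrow $a\colon i\to j$ of $Q$ and one loop $t_i$ in degree $-d$ at each vertex $i$. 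Hence $\tilde{Q}$ coincides with the graded quiver of the $(d+1)$-Calabi--Yau Ginzburg-type dg algebra $\Pi$ of $(Q,0)$.

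The main obstacle will be to identify the differential on $\tilde{Q}$. Degree counting in $\widehat{k\tilde{Q}}$ shows there are no paths of degree $1$, so $d$ vanishes on the degree-$0$ arrows; for $d\geq 3$ there are no paths of degree $2-d$, forcing $d(a^*)=0$. For $d=2$, $d(a^*)\in\widehat{kQ}$ is the cyclic derivative of a formal potential on $Q$; since $Q$ is a tree, no non-trivial cycles exist and this potential vanishes. The differential on $t_i$ lies in paths of degree $1-d$ from $i$ to $i$, which by degree counting are $k$-linear combinations of words of the form $u\,a^*\,v$ with $u,v$ paths in $Q$. The $A_\infty$-relations for $A^*$ together with minimality and the $(d+1)$-Calabi--Yau pairing determine these coefficients up to the bimodule rescalings studied in Lemma~\ref{lem:no-deformation-trivial-extension}; because $Q$ is a tree, all such rescalings are isomorphic, so one may normalise $d(t_i)=e_i\bigl(\sum_a(aa^*-a^*a)\bigr)e_i$. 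Thus $B\simeq\Pi$ as dg algebras.

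Finally, the higher-dimensional generalisation of Amiot's theorem (\cite{KellerYang11,Guolingyan11a,IyamaYang18}; cf.~Theorem~\ref{t:generalized-cluster-cat}) yields a triangle equivalence $\per(\Pi)/\cd_{fd}(\Pi)\cong\cc_Q^{(d)}$ sending the image of $\Pi$ to the standard cluster-tilting object $kQ$. Composing with the equivalence of Theorem~\ref{thm:cluster-resolution} gives $\cc\cong\cc_Q^{(d)}$ with $T\mapsto kQ$, as required.
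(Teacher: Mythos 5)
Your proposal is correct in substance and follows the paper's strategy for the first half (Theorem~\ref{thm:cluster-resolution} $\to$ Theorem~\ref{t:augmentation-of-non-pos-dg-alg} $\to$ Lemma~\ref{lem:dg-quiver-algebra-with-hereditary-0-th-cohomology} $\to$ the Calabi--Yau duality of Theorem~\ref{thm:cluster-resolution}(d), to pin down the graded quiver underlying $B$), but closes the argument differently. You identify $B$ explicitly as the $(d+1)$-Calabi--Yau completion $\Pi$ of $kQ$ --- writing down the graded quiver $\tilde Q$ and the differential $d(a^*)=0$, $d(t_i)=e_i(\sum_a(aa^*-a^*a))e_i$ --- and then invoke the higher-dimensional cluster category theorem (Guo, Iyama--Yang) together with the orbit-category identification to obtain $\per(\Pi)/\cd_{fd}(\Pi)\cong\cc^{(d)}_Q$. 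The paper instead never pins $B$ down explicitly: it shows that $B^*$ is (forgetting grading) a trivial extension $A(Q^{gr,op},\lambda,\mu)$, normalises via Lemma~\ref{lem:no-deformation-trivial-extension} to $A(Q^{gr,op},\mathbf{1},\mathbf{1})$, concludes that $B$ depends, up to quasi-equivalence, only on $Q$, and then closes by observing that $\cc^{(d)}_Q$ satisfies the same hypotheses and therefore is also triangle equivalent to $\per(B)/\cd_{fd}(B)$ for the very same $B$. The paper's ``uniqueness of $B$'' trick is self-contained and shorter, avoiding both the explicit form of $\Pi$ and the external Amiot--Guo recognition statement; your route is more constructive and makes the dg algebra visible, but relies on the identification $E(A(Q^{gr,op},\mathbf{1},\mathbf{1}))\simeq\Pi$ and the triangle equivalence $\per(\Pi)/\cd_{fd}(\Pi)\cong\cc^{(d)}_Q$, which are not proved in this paper. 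One small point worth tightening: your justification that $d(a^*)$ is a cyclic derivative of a potential for $d=2$ is unnecessary --- in a tree quiver there are simply no directed paths from $t(a)$ to $s(a)$ at all, so $d(a^*)=0$ outright; and the determination of $d(t_i)$ ``up to rescalings'' is really the content of the paper's trivial-extension argument, which you gesture at rather than carry out.
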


We give a proof of this theorem under the extra conditions that $k$ is algebraically closed and  that $Q$ is a tree quiver, that is, there are no cycles in the underlying graph of $Q$.

\begin{proof}[Proof of Theorem~\ref{thm:KR's-recognition-theorem}]
Assume that $k$ is algebraically closed and that $Q$ is a tree quiver with vertices $\{1,\ldots,r\}$.

Let $B$ be the dg algebra obtained in Theorem~\ref{thm:cluster-resolution}. Then 
 $\cc$ is triangle equivalent to $\per(B)/\cd_{fd}(B)$. We will show that up to quasi-equivalence $B$ depends not on $\cc$ but on $Q$ only. As $\cc_Q^{(d)}$ satisfies all the assumptions, it is also triangle equivalent to $\per(B)/\cd_{fd}(B)$. Therefore $\cc$ is triangle equivalent to $\cc_Q^{(d)}$.

By Theorem~\ref{thm:cluster-resolution}, $B$ is non-positive and satisfies the three conditions (FD), (BE) and (HS) in Section~\ref{ss:augmentation}. Thus by Theorem~\ref{t:augmentation-of-non-pos-dg-alg}, we may assume that $B$ is the dual bar construction of its $A_\infty$-Koszul dual $B^*$, and therefore $B=(\widehat{kQ'},d)$ is a minimal complete dg-quiver algebra. By Theorem~\ref{thm:cluster-resolution}(a), $H^0(B)\cong kQ$ is hereditary, and $H^p(B)=0$ for all $-d+2\leq p\leq -1$. So applying Lemma~\ref{lem:dg-quiver-algebra-with-hereditary-0-th-cohomology} we know that $Q'$ has no arrows in degrees $-d+2\leq p\leq -1$. This implies that $(B^*)^p=0$ for $2\leq p\leq d-1$.

As a graded vector space $B^*=\bigoplus_{p\in\mathbb{Z}}\bigoplus_{i,j=1}^r\Hom_{\cd(B)}(S_i,\Sigma^p S_j)$. By Theorem~\ref{thm:cluster-resolution}(d) there is an isomorphism
\begin{align}\label{eq:cy-duality}
D\Hom(S_i,\Sigma^p S_j)\cong \Hom(S_j,\Sigma^{d+1-p}S_i).
\end{align}
Since $(B^*)^p=0$ for $p<0$, this implies that $(B^*)^p=0$ for $p\neq 0,1,d,d+1$.
Let $e_i=\id_{S_i}$. Then $K:=(B^*)^0$ has a basis $\{e_1,\ldots,e_r\}$. Let $\{e_1^*,\ldots,e_r^*\}$ be the dual basis in $(B^*)^{d+1}$. Let $Q^{gr,op}$ be the opposite quiver of $Q$ with all arrows in degree $1$. Since $H^0(B)\cong kQ$, it follows from the dual bar construction that $(B^*)^1$ has a basis $Q^{gr,op}_1$. Let $\{\alpha^*\mid \alpha\in Q^{gr,op}_1\}$ be the dual basis of $Q^{gr,op}_1$ in $(B^*)^{d}$.
Because the isomorphism \eqref{eq:cy-duality} is compatible with compositions (see for example \cite[Section 2]{HocheneggerKalckPloog16}), we have $\alpha\alpha^*\neq 0$ and $\alpha^*\alpha\neq 0$ for any $\alpha\in Q^{gr,op}_1$. Therefore $(B^*,m_2)=A_d(Q^{gr,op},\lambda,\mu)$ for some functions $\lambda,\mu\colon Q^{gr,op}_1\to k^\times$. So $B^*\cong A_d(Q^{gr,op},\mathbf{1},\mathbf{1})$ by Lemma~\ref{lem:formality-of-trivial-extension} and Lemma~\ref{lem:no-deformation-trivial-extension} and thus depends on $Q$ only. So does $B$.
\end{proof}

\def\cprime{$'$} \def\cprime{$'$}
\providecommand{\bysame}{\leavevmode\hbox to3em{\hrulefill}\thinspace}
\providecommand{\MR}{\relax\ifhmode\unskip\space\fi MR }
\providecommand{\MRhref}[2]{%
  \href{http://www.ams.org/mathscinet-getitem?mr=#1}{#2}
}
\providecommand{\href}[2]{#2}

\end{document}